\newtheorem{theorem}{Theorem}[section]
\newtheorem{proposition}[theorem]{Proposition}
\newtheorem{lemma}[theorem]{Lemma}
\newtheorem{definition}{Definition}[section]
\newtheorem{example}{Example}[section]
\newtheorem{remark}{Remark}[section]
    \newcommand*{\addFileDependency}[1]{
    \typeout{(#1)}
    \@addtofilelist{#1}
    \IfFileExists{#1}{}{\typeout{No file #1.}}
    }
\title{The subdivision of hypergraphs}
\author[1,2]{Jian Liu}
\author[3,4]{Ran Liu}
\author[3]{Jie Wu \thanks{Corresponding author: wujie@bimsa.cn}}
\affil[1]{Mathematical Science Research Center, Chongqing University of Technology, Chongqing 400054, China}
\affil[2]{Department of Mathematics, Michigan State University, MI, 48824, USA}
\affil[3]{Yanqi Lake Beijing Institute of Mathematical Sciences and Applications, Beijing 101408, China}
\affil[4]{School of Mathematical Sciences, Beihang University, Beijing, 100191, China}
    \renewcommand*{\@fnsymbol}[1]{\ensuremath{\ifcase#1\or \dagger\or *\or *\or
   \mathsection\or \else\@ctrerr\fi}}
\date{}
\begin{document}
    \maketitle

    \paragraph{Abstract}

Hypergraphs, as a generalization of simplicial complexes, have long been a subject of interest in their geometric interpretation. The subdivision of simplicial complexes can, to some extent, provide insights into the geometry of simplicial complexes. In this paper, we introduce the concept of the subdivision of hypergraphs. Notably, the subdivision of hypergraphs can be reduced to the subdivision of simplicial complexes. Moreover, we prove that the subdivision of hypergraphs has the topological invariance with respect to the embedded homology.

    \paragraph{Keywords}
     Hypergraph,  embedded homology, subdivision, poset, topological invariance.

\footnotetext[1]
{ {\bf 2020 Mathematics Subject Classification.} Primary 05C65; Secondary 55U15, 55N35.}


\section{Introduction}\label{section:introduction}

As is said in \cite{battiston2020networks}, ``Although simplicial complexes overcome some of the problems encountered by other lower dimensional representations, they are still quite limited ... hypergraphs provide the most general and unconstrained description of higher-order interactions.''
Mathematicians have dedicated significant research efforts to exploring the combinatorial properties of hypergraphs. More recently, attention has shifted towards the topological aspects of hypergraphs.
In \cite{chung1992laplacian}, the authors studied the homology and Laplacian of $k$-uniform hypergraphs. In \cite{buzaglo2013topological}, the authors introduce the hypergraph topology, a distinct topology defined on a hypergraph.
Recently, S. Bressan, J. Li, S. Ren, and J. Wu  introduced the concept of embedded homology of hypergraphs \cite{bressan2019embedded}. The idea of embedded homology in hypergraphs can be traced back to the GLMY theory \cite{grigor2019homology,grigor2012homologies,grigor2018path,grigor2017homologies}.  Nevertheless, a geometric understanding of hypergraph remains an ongoing research topic.

An (abstract) simplicial complex always has an underlying space or a geometric realization. Moreover, the simplicial approximation theorem asserts that any arbitrary continuous map between geometric simplicial complexes can be approximated, in a suitable sense, by a simplicial map \cite{munkres2018elements}. The simplicial approximation heavily depends on the subdivision of simplicial complexes, which implies the geometry of simplicial complexes.

Classical result asserts that there is a correspondence $\xymatrix{\mathbf{SimpCpx}\ar@<0.5ex>[r] & \mathbf{Pos}\ar@<0.5ex>[l]}$ between the category of simplicial complexes and the category of posets. A poset can be endowed with an Alexandrov topology \cite{alexandroff1937Diskrete}. Moreover, one can obtain functors
\begin{equation*}
\xymatrix{\mathscr{X}: \mathbf{SimpCpx}\ar@<0.5ex>[r] & \mathbf{T_{0}A}:\mathscr{K}\ar@<0.5ex>[l]}
\end{equation*}
between the category of simplicial complexes and the category of $T_{0}$ Alexandrov spaces ($A$-spaces) \cite{mccord1966singular}. The functor $\mathrm{sd}=\mathscr{K}\mathscr{X}:\mathbf{SimpCpx}\to \mathbf{SimpCpx}$ is the barycentric subdivision functor of simplicial complexes. Any finite simplicial complex can be modelled by the inverse limit of a subdivision system \cite{clader2009inverse,thibault2013homotopy}, i.e., the space $|\mathscr{K}(X)|$ has the homotopy type of the inverse limit of the directed system
\begin{equation*}
  \xymatrix{
  \cdots\ar@{->}[r]^-{\inf}&(\mathrm{sd}^{2}X)^{\mathrm{op}}\ar@{->}[r]^-{\inf}&(\mathrm{sd}X)^{\mathrm{op}}\ar@{->}[r]^-{\inf}&X^{\mathrm{op}},
  }
\end{equation*}
where $X$ is a locally finite $A$-space. Inspired by this idea, the subdivision of hypergraphs has the potential to reveal the geometric encoding of hypergraphs. In the literature \cite{banerjee2023adjacency,iradmusa2020subdivision}, the authors have independently provided combinatorial definitions for the subdivision of hypergraphs. We aim to generalize the subdivision of simplicial complexes and offer a definition for hypergraph subdivision that incorporates a more geometric interpretation.
In \cite{potvin2023hypergraphs}, the author regards a hypergraph as a poset and defines hypergraph subdivision as the order complex of the poset. However, this definition imitates the form of subdivision of simplicial complexes, and the subdivision of a hypergraph results in a simplicial complex. Besides, an intriguing phenomenon emerges: a high-dimensional hyperedge can be subdivided into lower-dimensional simplices, which deviates from our geometric intuition.

In this paper, we will introduce the subdivision of hypergraphs, which possesses three distinct advantages, leading us to believe that it is a satisfactory definition:
\begin{itemize}
  \item The subdivision of hypergraphs coincides with the subdivision of simplicial complexes when the hypergraphs are reduced to simplicial complexes;
  \item The subdivision of hypergraphs aligns with our geometric intuition and can be constructed based on the poset structure;
  \item The subdivision of hypergraphs is a functor on the category of hypergraphs and exhibits topological invariance with respect to the embedded homology of hypergraphs.
\end{itemize}

A graded poset is a poset $X$ equipped with a rank function $\rho:X\to \mathbb{N}$ such that if $y$ covers $x$, then $\rho(y)=\rho(x)+1$. A marked graded poset, denoted as $(X,S)$, is a graded poset $X$ with a subset $S\subseteq X$. We consider the category $\mathbf{GrPos^{+}}$  whose objects are the marked graded posets, and whose morphisms are the compatible morphisms of marked graded posets. Then there is a functor $\mathscr{H}:\mathbf{GrPos^{+}}\to \mathbf{Hyp}$ from the category of marked graded posets to the category of hypergraphs (Proposition \ref{proposition:functor}). On the other hand, there is a functor $\mathscr{P}^{+}:\mathbf{Hyp}\to \mathbf{GrPos^{+}}$ from the category of hypergraphs to the category of marked posets (Proposition \ref{proposition:functor_marked}). We define the subdivision of hypergraphs as a functor
\begin{equation*}
  \mathrm{sd}=\mathscr{H}\mathscr{P}^{+}:\mathbf{Hyp}\to\mathbf{Hyp}
\end{equation*}
on the category of hypergraphs (Definition \ref{definition:subdivision}).
Let $\mathcal{H}$ be a hypergraph, and let $\Delta \mathcal{H}=\{\tau\neq\emptyset|\tau\subseteq \sigma\text{ for some }\sigma\in\mathcal{H}\}$ be the simplicial closure of $\mathcal{H}$. It is worth noting that $\Delta (\mathrm{sd}(\mathcal{H}))=\mathrm{sd}(\Delta\mathcal{H})$, indicating the compatibility between the subdivision of hypergraphs and the subdivision of simplicial complexes (Proposition \ref{proposition:subdivision_closure}). Besides, we present criteria for determining whether an element belongs to the subdivision of a hypergraph and provide methods for computing the subdivision of hypergraphs. Our main result is the topological invariance of the subdivision of hypergraphs, which is stated as follows:
\begin{theorem}
There is a natural isomorphism of embedded homology of hypergraphs
\begin{equation*}
  H_{\ast}(\mathrm{sd}): H_{\ast}(\mathcal{H})\stackrel{\cong}{\rightarrow} H_{\ast}(\mathrm{sd}(\mathcal{H})).
\end{equation*}
\end{theorem}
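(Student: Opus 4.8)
The plan is to establish the natural isomorphism by reducing the subdivision of a hypergraph to the subdivision of its simplicial closure, where classical results apply, and then tracking how the embedded homology interacts with this reduction. The key conceptual tool is the identity $\Delta(\mathrm{sd}(\mathcal{H}))=\mathrm{sd}(\Delta\mathcal{H})$ from Proposition \ref{proposition:subdivision_closure}, which tells us that the subdivision operation commutes with taking simplicial closures. Recall that the embedded homology of a hypergraph $\mathcal{H}$ is defined via the infimum chain complex $\mathrm{Inf}_{\ast}(\mathcal{H})=C_{\ast}(\mathcal{H})\cap \partial^{-1}(C_{\ast}(\mathcal{H}))$ sitting inside the simplicial chain complex of $\Delta\mathcal{H}$, so the homology of $\mathcal{H}$ genuinely depends on how the hyperedges of $\mathcal{H}$ embed into their closure.

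\medskip

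First I would construct the chain map explicitly. The functoriality of $\mathrm{sd}$ on $\mathbf{Hyp}$ (established via $\mathscr{H}$ and $\mathscr{P}^{+}$) together with the classical barycentric subdivision chain map $\mathrm{sd}_{\#}:C_{\ast}(\Delta\mathcal{H})\to C_{\ast}(\mathrm{sd}(\Delta\mathcal{H}))=C_{\ast}(\Delta(\mathrm{sd}(\mathcal{H})))$ provides a candidate map on the ambient simplicial chain complexes. The crucial step is to show that this classical subdivision map restricts to the infimum subcomplexes, i.e., that $\mathrm{sd}_{\#}$ carries $\mathrm{Inf}_{\ast}(\mathcal{H})$ into $\mathrm{Inf}_{\ast}(\mathrm{sd}(\mathcal{H}))$. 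This requires verifying two conditions: that the image of a chain supported on hyperedges of $\mathcal{H}$ is supported on hyperedges of $\mathrm{sd}(\mathcal{H})$, and that this restriction is compatible with the boundary-preimage condition defining $\mathrm{Inf}_{\ast}$. The first condition should follow from the combinatorial description of $\mathscr{H}\mathscr{P}^{+}$ together with the criteria (mentioned in the excerpt) for membership in the subdivision.

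\medskip

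Next I would invoke the classical fact that barycentric subdivision induces an isomorphism on simplicial homology, via a chain homotopy equivalence $\mathrm{sd}_{\#}$ on $C_{\ast}(\Delta\mathcal{H})$. Since the embedded homology is the homology of the infimum subcomplex, I would aim to promote the chain homotopy equivalence on the full simplicial complexes to one on the infimum subcomplexes. The cleanest route is to show that the inclusion $\mathrm{Inf}_{\ast}(\mathcal{H})\hookrightarrow C_{\ast}(\Delta\mathcal{H})$ and its counterpart after subdivision fit into a commuting diagram with $\mathrm{sd}_{\#}$, and that the subdivision respects the infimum construction at the level of the filtration by the supporting chain groups $C_{\ast}(\mathcal{H})$. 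Naturality in $\mathcal{H}$ should come for free once the construction is phrased entirely in terms of the functors $\mathscr{H}$, $\mathscr{P}^{+}$, and the natural transformation realizing classical subdivision.

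\medskip

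I expect the main obstacle to be the restriction step: showing that the classical subdivision chain map preserves the infimum subcomplex structure, rather than merely the ambient simplicial structure. The subtlety is that $\mathrm{Inf}_{\ast}(\mathcal{H})$ is defined by an intersection with a boundary preimage, and subdivision chain maps are sums over flags (chains in the face poset) with signs, so it is not immediate that a cycle condition or the boundary-compatibility survives. I would address this by exploiting the identity $\Delta(\mathrm{sd}(\mathcal{H}))=\mathrm{sd}(\Delta\mathcal{H})$ to pin down exactly which subdivided simplices land in $\mathrm{sd}(\mathcal{H})$, and then argue that the supporting chain group $C_{\ast}(\mathrm{sd}(\mathcal{H}))$ is precisely the image of $C_{\ast}(\mathcal{H})$ under $\mathrm{sd}_{\#}$ up to the subcomplex generated by $\Delta\mathcal{H}$; if this holds, the preservation of the infimum follows formally since intersection with a preimage commutes with a chain map that is an isomorphism onto its image compatible with the filtration. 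The final packaging as a natural isomorphism of embedded homology then reduces to the classical statement applied fibrewise to the infimum subcomplexes.
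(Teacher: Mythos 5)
Your overall direction---restrict the classical barycentric subdivision chain map $\rho=\mathrm{sd}_{\#}$ to the embedded data and use $\Delta(\mathrm{sd}(\mathcal{H}))=\mathrm{sd}(\Delta\mathcal{H})$ to identify the ambient complexes---is the same starting point as the paper, and showing $\rho(D_{\ast}(\mathcal{H}))\subseteq D_{\ast}(\mathrm{sd}(\mathcal{H}))$ does work exactly as you suggest (each flag $\Delta^{\sigma}_{k_{1}\cdots k_{n}}$ appearing in $\rho(\sigma)$ is $\mathcal{H}$-successive, hence lies in $\mathrm{sd}(\mathcal{H})$). But there is a genuine gap in how you propose to conclude. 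Your key structural claim, that $C_{\ast}(\mathrm{sd}(\mathcal{H}))$ is ``precisely the image of $C_{\ast}(\mathcal{H})$ under $\mathrm{sd}_{\#}$'' up to a subcomplex, is false: $D_{\ast}(\mathrm{sd}(\mathcal{H}))$ is spanned by \emph{all individual} hyperedges of $\mathrm{sd}(\mathcal{H})$ (in Example \ref{example:subdivision} these include single flags such as $\{1\}\{0,1\}$ and $\{1\}\{0,1,2\}$), whereas the image of $\mathrm{sd}_{\#}$ consists only of the particular signed sums of full flags; the ranks do not even match. Consequently the ``formal'' argument that intersection with a boundary preimage is preserved collapses, and with it your route to the isomorphism.

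The missing idea is that embedded homology is \emph{not} invariant under ambient chain homotopy equivalences that merely restrict to the infimum subcomplexes; it is invariant only under homotopies whose homotopy data are themselves embedded maps (this is exactly Proposition \ref{proposition:homotopy}). Cycles of $\mathrm{Inf}_{\ast}(\mathrm{sd}(\mathcal{H}))$ may involve hyperedges outside $\mathrm{im}(\rho)$, so surjectivity of $H_{\ast}(\rho)$ cannot be inherited from the classical statement. The paper supplies the three ingredients your proposal lacks: (1) an explicit homotopy inverse, the last-vertex map $\pi$, together with the verification (via Proposition \ref{proposition:description_p}) that $\pi(D_{\ast}(\mathrm{sd}(\mathcal{H})))\subseteq D_{\ast}(\mathcal{H})$; (2) the identity $\pi\rho=\mathrm{id}$; and (3) an explicit chain homotopy $h$ with $dh+hd=\mathrm{id}-\rho\pi$, together with the proof---the technical heart of the argument, carried out through the flags $\Delta^{\tau_{i}}_{k_{1}\cdots k_{i}}\sigma_{i}\cdots\sigma_{n}$ and Proposition \ref{proposition:initial}---that $h(D_{\ast}(\mathrm{sd}(\mathcal{H})))\subseteq D_{\ast+1}(\mathrm{sd}(\mathcal{H}))$. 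Only after these embedded-compatibility checks does Proposition \ref{proposition:homotopy} yield $H(\pi)H(\rho)=\mathrm{id}$ and $H(\rho)H(\pi)=\mathrm{id}$. Without constructing $\pi$ and $h$ and verifying they respect the marked submodules, the theorem does not follow.
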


In the next section, we review fundamental concepts related to the subdivision of simplicial complexes and the embedded homology of hypergraphs. Section \ref{section:marked} provides the construction of hypergraphs from marked posets. In Section \ref{section:subdivision}, we introduce the subdivision of hypergraphs.  At last, we present the proof of our main theorem in Section \ref{section:homology_invariance}.

\section{Preliminaries}\label{section:Preliminary}
\subsection{Subdivision of simplicial complexes}

Let $V$ be a non-empty finite set. An abstract simplicial complex is defined as a non-empty subset $\mathcal{K}$ of the power set $\mathbf{P}(V)$ of $V$ that satisfies the condition: If $\sigma$ is an element of $\mathcal{K}$, then every non-empty subset of $\sigma$ must also belong to $\mathcal{K}$. In contrast, a hypergraph $\mathcal{H}$ on $V$ is a nonempty subset of $\mathbf{P}(V)$. The subdivision of hypergraphs heavily depends on the idea of the subdivision of simplicial complexes.
In this section, we will recall the fundamental concepts related to the subdivision of simplicial complexes.

Let $X$ be a poset. We always denote $\leq$ as the partial order of a poset $X$. As usual, the notion $<$ is the relation on $X$ such that $x<y$ if and only if $x\leq y$ and $x\neq y$. Given a poset $X$, we always have an abstract simplicial complex $\mathscr{F}(X)$ with $n$-simplices given by the sets $\{x_{0},x_{1},\dots,x_{n}\}$ satisfying $x_{0}<x_{1} <\dots< x_{n}$. On the other hand, for an abstract simplicial complex $\mathcal{K}$, we can obtain a poset $\mathscr{P}(\mathcal{K})$ whose elements are the simplices of $\mathcal{K}$, and the partial order is given by $\tau\leq \sigma$ if $\tau\subseteq \sigma$. Thus, we have two functors
\begin{equation*}
\xymatrix{\mathscr{P}: \mathbf{SimpCpx}\ar@<0.5ex>[r] & \mathbf{Pos}:\mathscr{F}\ar@<0.5ex>[l]}
\end{equation*}
between the category of simplicial complexes and the category of posets. The functor $\mathrm{sd}=\mathscr{F}\mathscr{P}:\mathbf{SimpCpx}\to \mathbf{SimpCpx}$ is the subdivision functor on the category of simplicial complexes. It is worth noting that the subdivision functor $\mathscr{F}\mathscr{P}$ is identified with the subdivision functor $\mathscr{K}\mathscr{X}$ mentioned in the introduction. It is a well-known fact that the subdivision has the topological invariance.
\begin{theorem}[\cite{munkres2018elements}]
There is a quasi-isomorphism of chain complexes
\begin{equation*}
  \rho:C_{\ast}(\mathcal{K})\rightarrow C_{\ast}(\mathrm{sd}(\mathcal{K})),
\end{equation*}
that is, $H_{p}(\mathcal{K})\cong H_{p}(\mathrm{sd}(\mathcal{K}))$ for $p\geq 0$.
\end{theorem}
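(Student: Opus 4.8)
The plan is to prove that the subdivision chain map $\rho$ is in fact a chain homotopy equivalence, by constructing an explicit chain map in the reverse direction and showing that both composites are chain homotopic to the respective identities via the method of acyclic carriers; a chain homotopy equivalence is \emph{a fortiori} a quasi-isomorphism, which yields the asserted isomorphisms on homology. First I would make $\rho$ precise as the barycentric subdivision operator $\rho = \mathrm{sd}_{\#}: C_{\ast}(\mathcal{K}) \to C_{\ast}(\mathrm{sd}(\mathcal{K}))$, defined inductively over the dimension of simplices using the cone (join) operation: set $\rho(v) = v$ on each vertex $v$, and for an oriented $p$-simplex $\sigma$ with barycenter $\hat{\sigma}$ put $\rho(\sigma) = \hat{\sigma} \ast \rho(\partial \sigma)$, where $\hat{\sigma} \ast (-)$ denotes the cone, on the new vertex $\hat{\sigma}$, of a chain supported in $\mathrm{sd}(\partial\sigma)$. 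Applying the standard boundary formula for the cone operation, a straightforward induction on $p$ shows $\partial\rho = \rho\partial$, so $\rho$ is a chain map, and by construction $\rho(\sigma)$ is supported in $\mathrm{sd}(\overline{\sigma})$, the subdivision of the closed simplex $\overline{\sigma}$.

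Next I would construct the reverse map. Fix a linear order on the vertices of $\mathcal{K}$ and define a vertex map $g: \mathrm{sd}(\mathcal{K}) \to \mathcal{K}$ sending each vertex $\hat{\sigma}$ of $\mathrm{sd}(\mathcal{K})$ (that is, each simplex $\sigma$ of $\mathcal{K}$) to the largest vertex of $\sigma$. If $\hat{\sigma_0}, \dots, \hat{\sigma_p}$ span a simplex of $\mathrm{sd}(\mathcal{K})$, then $\sigma_0 \subsetneq \cdots \subsetneq \sigma_p$, so $g(\hat{\sigma_0}), \dots, g(\hat{\sigma_p})$ all lie in $\sigma_p$ and span a face of $\sigma_p$; hence $g$ is simplicial, and it is a simplicial approximation of the identity $|\mathrm{sd}(\mathcal{K})| = |\mathcal{K}|$. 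Let $\lambda = g_{\#}: C_{\ast}(\mathrm{sd}(\mathcal{K})) \to C_{\ast}(\mathcal{K})$ be the induced chain map.

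Finally I would compare the two composites with the identities using the acyclic carrier theorem, which asserts that two augmentation-preserving chain maps carried by a common acyclic carrier are chain homotopic. For $\lambda\rho$ and $\mathrm{id}_{C_{\ast}(\mathcal{K})}$, both are carried by the carrier $\Phi(\sigma) = \overline{\sigma}$, the closed simplex spanned by $\sigma$, which is acyclic since $\overline{\sigma}$ is a cone; thus $\lambda\rho \simeq \mathrm{id}$. For $\rho\lambda$ and $\mathrm{id}_{C_{\ast}(\mathrm{sd}(\mathcal{K}))}$, I assign to each simplex $\tau$ of $\mathrm{sd}(\mathcal{K})$ spanned by $\hat{\sigma_0} < \cdots < \hat{\sigma_p}$ the acyclic subcomplex $\Psi(\tau) = \mathrm{sd}(\overline{\sigma_p})$, where $\sigma_p$ is the carrier simplex of $\tau$ in $\mathcal{K}$; one checks that $\tau \subseteq \Psi(\tau)$, so the identity is carried by $\Psi$, and that $\lambda(\tau)$ is supported in $\overline{\sigma_p}$ while $\rho$ then lands in $\mathrm{sd}(\overline{\sigma_p})$, so $\rho\lambda$ is carried by $\Psi$; hence $\rho\lambda \simeq \mathrm{id}$. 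Consequently $\rho$ is a chain homotopy equivalence, therefore a quasi-isomorphism, and $H_p(\mathcal{K}) \cong H_p(\mathrm{sd}(\mathcal{K}))$ for all $p \geq 0$.

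I expect the main obstacle to be the careful verification of the acyclic carrier hypotheses: one must confirm that each assigned subcomplex is genuinely acyclic (contractibility of subdivided simplices), that the carriers $\Phi$ and $\Psi$ are monotone with respect to faces, and, most delicately, that $\rho\lambda$ is actually carried by $\Psi$ — this hinges on the compatibility between the chosen vertex map $g$ and the subdivision operator, namely that $g$ never leaves the carrier simplex $\sigma_p$. Handling the degree-zero augmentation condition is the remaining routine bookkeeping needed to invoke the theorem.
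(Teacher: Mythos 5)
Your proof is correct, and it is the classical acyclic-carrier argument (indeed the one in Munkres, which is what the paper cites for this theorem); but it is genuinely different from the machinery this paper actually builds, since the paper never invokes carriers and instead reproves a generalization of this statement (Theorem \ref{theorem:main}) by purely explicit formulas. There, $\rho$ is given in closed form as the signed sum $\rho(\sigma)=\sum(-1)^{k_{1}+\cdots+k_{n}-\frac{n(n+1)}{2}}\Delta^{\sigma}_{k_{1}\cdots k_{n}}$ over flags of iterated faces --- the flag expansion of your inductive cone definition $\rho(\sigma)=\hat{\sigma}\ast\rho(\partial\sigma)$ --- the reverse map $\pi$ is exactly your last-vertex map $g_{\#}$, and the homotopies are written down explicitly: the paper computes $\pi\rho=\mathrm{id}$ \emph{on the nose} (only the flag $k_{t}=t$ survives under $\pi$), which your carrier argument for $\lambda\rho\simeq\mathrm{id}$ does not notice, and exhibits an explicit $h$ with $dh+hd=\mathrm{id}-\rho\pi$. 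The trade-off is instructive: your route is shorter and avoids all sign bookkeeping, but it is tied to simplicial chain complexes, where acyclicity of $\overline{\sigma}$ and $\mathrm{sd}(\overline{\sigma_{p}})$ is available and the carrier theorem applies; it does not transfer to the embedded homology of hypergraphs, where one works inside the infimum chain complex $\mathrm{Inf}_{\ast}(D_{\ast},C_{\ast})$ and must verify by hand that $\rho$, $\pi$, and crucially the homotopy $h$ preserve the graded submodule $D_{\ast}$ (this is the content of Proposition \ref{proposition:homotopy} and the case analysis in the proof of Theorem \ref{theorem:main}; a homotopy produced abstractly by carriers gives no such control). So for the simplicial statement as quoted your proof is complete --- the carrier hypotheses you flag (acyclicity of the subdivided closed simplex, which is the cone $\hat{\sigma}\ast\mathrm{sd}(\partial\sigma)$; monotonicity of $\Phi$ and $\Psi$; that the last-vertex map never leaves the top simplex of a flag) all check out routinely --- while the paper's heavier explicit approach is what buys the generalization to $H_{\ast}(\mathcal{H})\cong H_{\ast}(\mathrm{sd}(\mathcal{H}))$.
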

The abstract simplicial complex $\mathcal{K}$ has a geometric realization $|\mathcal{K}|$. There is a classical construction of the barycentric subdivision of geometric simplicial complexes. It is well-known that $\mathrm{sd}(|\mathcal{K}|)=|\mathrm{sd}(\mathcal{K})|$, which says that the subdivision of abstract simplicial complexes coincides with the barycentric subdivision of geometric simplicial complexes. In the following sections, we will introduce the concept of subdivision of hypergraphs, which can be seen as a generalization of the subdivision of abstract simplicial complexes.

\subsection{Embedded homology}

Now, we will review the fundamental concept of embedded homology, which defines the homology of a graded module embedded into a chain complex. Embedded homology provides us with a topological perspective on hypergraphs. From now on, the ground ring $R$ is always assumed to be a commutative ring with unit.

Let $C_{\ast}=(C_{n})_{n\geq 0}$ be a chain complex over $R$. Let $D_{\ast}=\{D_{n}\}_{n\geq 0}$ be a graded $R$-submodule of $C_{\ast}$. We denote the pair $(D_{\ast},C_{\ast})$ as the \emph{embedded complex}. An \emph{embedded map} $f:(D_{\ast},C_{\ast})\rightarrow (D'_{\ast+i},C'_{\ast+i})$  of degree $i$ is a graded $R$-module homomorphism
\begin{equation*}
  f_{n}:C_{n}\rightarrow C'_{n+i}
\end{equation*}
satisfying $f(D_{n})\subseteq D'_{n+i}$.
A \emph{morphism of embedded complexes} $f=\{f_{n}\}_{n\geq 0}:(D_{\ast},C_{\ast})\rightarrow (D'_{\ast},C'_{\ast})$ is an embedded map of degree zero such that $df=fd$ on $C_{\ast}$. Here, $d$ always denotes the differential on the corresponding chain complex.

Let $(D_{\ast},C_{\ast})$ be an embedded complex. The \emph{infimum chain complex} $\mathrm{Inf}_{\ast}(D_{\ast},C_{\ast})$ of $(D_{\ast},C_{\ast})$ is given by $\mathrm{Inf}_{n}(D_{\ast},C_{\ast})=D_{n}\cap d^{-1}D_{n-1}=\{x\in D_{n}|dx\in D_{n-1}\}$. Then we have a functor $\mathrm{Inf}_{\ast}:\mathbf{Emb}\to \mathbf{Chain}$ from the category of embedded chain complexes to the category of chain complexes.
\begin{definition}[\cite{bressan2019embedded}]
The \emph{embedded homology} is a functor $H\circ \mathrm{Inf}_{\ast}:\mathbf{Emb}\to \mathbf{Mod}_{R}$ from the category of embedded chain complexes to the category of $R$-modules.
\end{definition}
By observing the definition, we deduce the following lemma.
\begin{lemma}\label{lemma:isomorphism}
Let $C_{\ast}$ and $C_{\ast}'$ be chain complexes. Assume that $D_{\ast}\subseteq C_{\ast}\cap C_{\ast}'$. Then we have
\begin{equation*}
  \mathrm{Inf}_{n}(D_{\ast},C_{\ast})=\mathrm{Inf}_{n}(D_{\ast},C_{\ast}'),\quad n\geq 0.
\end{equation*}
\end{lemma}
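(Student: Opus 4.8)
The plan is to observe that the infimum chain complex in degree $n$ is, by its very definition, an intrinsic construction attached to the graded module $D_\ast$ together with the differential restricted to $D_\ast$; the ambient chain complex contributes nothing beyond this restriction. Concretely, I would start from
\[
  \mathrm{Inf}_n(D_\ast,C_\ast)=\{x\in D_n\mid dx\in D_{n-1}\}
\]
and emphasize that the right-hand side sees $C_\ast$ only through the values $dx$ for $x\in D_n$ and through the submodule $D_{n-1}$. In particular, neither the higher modules $C_m$ nor any structure of $C_\ast$ away from the image of $D_\ast$ plays any role.

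Next I would pin down the one point that actually needs the hypothesis, namely that the differential acting on $D_\ast$ is the same whether we regard $D_\ast$ as living inside $C_\ast$ or inside $C_\ast'$. Writing $d$ for the differential (following the paper's convention that $d$ denotes the differential on the chain complex at hand), the assumption $D_\ast\subseteq C_\ast\cap C_\ast'$ is to be read as saying that $D_\ast$ is a common graded submodule inside a shared ambient module, so that for every $x\in D_n\subseteq C_n\cap C_n'$ the boundary $dx$ is one and the same element, independently of which complex we view $x$ in. In the applications of this paper the complexes $C_\ast$ and $C_\ast'$ are chain complexes of simplicial structures carrying the single standard boundary operator, which makes this agreement immediate; abstractly it is precisely the content of the intersection hypothesis.

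With the differential agreeing on $D_\ast$, the conclusion reduces to a one-line set equality: for $x\in D_n$ the condition $dx\in D_{n-1}$ is literally the same condition in both settings, so
\[
  \mathrm{Inf}_n(D_\ast,C_\ast)=\{x\in D_n\mid dx\in D_{n-1}\}=\mathrm{Inf}_n(D_\ast,C_\ast')
\]
for every $n\geq 0$. The step I would flag as the main (if modest) obstacle is exactly the justification in the previous paragraph, that the two ambient differentials restrict to the same map on $D_\ast$; once the intersection hypothesis is interpreted correctly this is automatic, and everything else is a direct unwinding of the definition of $\mathrm{Inf}_\ast$. I would therefore keep the write-up short, making the definitional dependence explicit rather than performing any computation.
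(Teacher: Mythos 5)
Your proof is correct and takes the same route the paper intends: the paper gives no written argument beyond the remark that the lemma follows ``by observing the definition,'' and your definitional unwinding of $\mathrm{Inf}_n(D_\ast,C_\ast)=\{x\in D_n\mid dx\in D_{n-1}\}$ is exactly what that remark leaves implicit. Your explicit flagging of the one real point --- that the hypothesis $D_\ast\subseteq C_\ast\cap C_\ast'$ must be read as placing both complexes in a common ambient module so that the two differentials restrict to the same map on $D_\ast$ (as indeed happens in the paper's application, where both are simplicial chain complexes with the standard boundary) --- is in fact more careful than the paper itself.
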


\begin{example}
Let $\mathcal{H}$ be a hypergraph, and let $\Delta\mathcal{H}$ be the simplicial closure of $\mathcal{H}$. Suppose that $D_{\ast}(\mathcal{H})$ is the free $\mathbb{Z}$-module generated by the hyperedges in $\mathcal{H}$. Then $(D_{\ast}(\mathcal{H}),C_{\ast}(\Delta\mathcal{H}))$ is an embedded complex. The embedded homology of $\mathcal{H}$ is given by
\begin{equation*}
  H_{n}(\mathcal{H})=H_{n}(D_{\ast}(\mathcal{H}),C_{\ast}(\Delta\mathcal{H})),\quad n\geq 0.
\end{equation*}
Lemma \ref{lemma:isomorphism} shows that the choice of the simplicial closure $\Delta\mathcal{H}$ is reasonable. Indeed, if we choose another simplicial complex $\mathcal{K}$ containing $\mathcal{H}$, then we have $H_{n}(D_{\ast}(\mathcal{H}),C_{\ast}(\Delta\mathcal{H}))=H_{n}(D_{\ast}(\mathcal{H}),C_{\ast}(\mathcal{K}))$ for $n\geq 0$.
\end{example}

Two morphisms $f,g:(D_{\ast},C_{\ast})\rightarrow (D'_{\ast},C'_{\ast})$ of embedded complexes are \emph{homotopic}, denoted by $f\simeq g$, if there is an embedded map $h:(D_{\ast},C_{\ast})\rightarrow (D'_{\ast+1},C'_{\ast+1})$ of degree $1$ such that $f-g=dh+hd$. A morphism $f:(D_{\ast},C_{\ast})\rightarrow (D'_{\ast},C'_{\ast})$ is a \emph{chain homotopy equivalence} if there exists a morphism $g:(D'_{\ast},C'_{\ast})\rightarrow (D_{\ast},C_{\ast})$ such that $g\circ f\simeq \mathrm{id}$ and $f\circ g\simeq \mathrm{id}$.

\begin{proposition}\label{proposition:homotopy}
Suppose $f,g:(D_{\ast},C_{\ast})\rightarrow (D'_{\ast},C'_{\ast})$ are homotopic. Then the induced morphisms $$f^{\flat},g^{\flat}:\mathrm{Inf}_{\ast}(D_{\ast},C_{\ast})\to \mathrm{Inf}_{\ast}(D_{\ast}',C_{\ast}')$$ are homotopic. Specifically, we have $H_{n}(f)=H_{n}(g):H_{n}(D_{\ast},C_{\ast})\rightarrow H_{n}(D'_{\ast},C'_{\ast})$ for $n\geq 0$.
\end{proposition}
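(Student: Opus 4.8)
The plan is to verify that the entire homotopy datum restricts from the ambient embedded complexes to the infimum chain complexes, after which the homotopy identity descends formally. I would first record that the differential on $\mathrm{Inf}_\ast(D_\ast,C_\ast)$ is simply the restriction of $d$: if $x\in\mathrm{Inf}_n(D_\ast,C_\ast)$ then $dx\in D_{n-1}$ and $d(dx)=0\in D_{n-2}$, so $dx\in\mathrm{Inf}_{n-1}(D_\ast,C_\ast)$. Thus $\mathrm{Inf}_\ast$ is genuinely a subcomplex with the inherited differential.

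Next I would check that $f$ and $g$ induce $f^\flat$ and $g^\flat$ by restriction. For $x\in\mathrm{Inf}_n(D_\ast,C_\ast)$ we have $x\in D_n$ and $dx\in D_{n-1}$; since $f$ is an embedded map of degree zero, $f(x)\in D'_n$, and since $f$ commutes with $d$, we get $d(f(x))=f(dx)\in D'_{n-1}$. Hence $f(x)\in\mathrm{Inf}_n(D'_\ast,C'_\ast)$, and the same holds for $g$. So $f^\flat,g^\flat$ are nothing more than the restrictions of $f,g$ to the infimum subcomplexes.

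The crux is to show that the degree-$1$ embedded map $h$ also restricts to a map $h^\flat:\mathrm{Inf}_n(D_\ast,C_\ast)\to\mathrm{Inf}_{n+1}(D'_\ast,C'_\ast)$. For $x\in\mathrm{Inf}_n(D_\ast,C_\ast)$, the hypothesis $h(D_n)\subseteq D'_{n+1}$ immediately gives $h(x)\in D'_{n+1}$; the delicate point is that $h(x)$ also satisfies $d(h(x))\in D'_n$, which is \emph{not} automatic for an arbitrary degree-$1$ map. This is precisely where the homotopy relation intervenes: using $f-g=dh+hd$ on $C_\ast$, we may rewrite $d(h(x))=f(x)-g(x)-h(dx)$. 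Now $f(x),g(x)\in D'_n$ as above, and $h(dx)\in D'_n$ because $dx\in D_{n-1}$ together with $h(D_{n-1})\subseteq D'_n$; therefore $d(h(x))\in D'_n$, so $h(x)\in\mathrm{Inf}_{n+1}(D'_\ast,C'_\ast)$. I expect this verification — that the contracting homotopy preserves the infimum subcomplex — to be the main (and essentially the only genuine) obstacle, since it is the one place where the defining constraint of $\mathrm{Inf}_\ast$ fails to be preserved for formal reasons and must instead be extracted from the homotopy identity itself.

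Finally, since $f^\flat,g^\flat,h^\flat$ are the restrictions of $f,g,h$ and the differentials on the infimum complexes are restrictions of $d$, the identity $f-g=dh+hd$ restricts verbatim to $f^\flat-g^\flat=dh^\flat+h^\flat d$ on $\mathrm{Inf}_\ast(D_\ast,C_\ast)$. Hence $f^\flat\simeq g^\flat$, and passing to homology yields $H_n(f)=H_n(g)$ for all $n\geq 0$.
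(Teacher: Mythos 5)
Your proposal is correct and follows essentially the same argument as the paper: restrict $f$ and $g$ using $df=fd$, then use the homotopy identity $dh+hd=f-g$ together with $h(D_{n-1})\subseteq D'_{n}$ to show $h$ itself preserves the infimum subcomplexes, after which the homotopy relation restricts verbatim. The only (harmless) addition is your explicit preliminary check that $\mathrm{Inf}_\ast$ is a subcomplex, which the paper leaves implicit.
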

\begin{proof}
Let $f-g=dh+hd$ be an embedded chain homotopy. Since $f$ is an embedded chain morphism, we have $f(D_{n})\subseteq D'_{n}$. For each $x\in D_{n}\cap d^{-1}D_{n-1}$, we obtain
\begin{equation*}
  df(x)=fd(x)\subseteq f(D_{n-1})\subseteq D'_{n-1},
\end{equation*}
which shows that $f(x)\in d^{-1}D'_{n-1}$. Thus we have $f(D_{n}\cap d^{-1}D_{n-1})\subseteq D_{n}'\cap d^{-1}D_{n-1}'$. Hence, $f^{\flat}:\mathrm{Inf}_{\ast}(D_{\ast},C_{\ast})\to \mathrm{Inf}_{\ast}(D'_{\ast},C'_{\ast})$ is a morphism of chain complexes. Similarly, $g^{\flat}:\mathrm{Inf}_{\ast}(D_{\ast},C_{\ast})\to \mathrm{Inf}_{\ast}(D'_{\ast},C'_{\ast})$ is a morphism of chain complexes.

Since $h$ is an embedded map of degree 1, we have $h(D_{n-1})\subseteq D'_{n}$. For each $x\in D_{n}\cap d^{-1}D_{n-1}$, we have $hd(x)\subseteq h(D_{n-1})\subseteq D'_{n}$. It follows that
\begin{equation*}
  dh(x)=f(x)-g(x)-hd(x)\subseteq D'_{n}.
\end{equation*}
Hence, we obtain $h(x)\subseteq d^{-1}D'_{n}$, which implies $h(x)\subseteq D'_{n+1}\cap d^{-1}D'_{n}$. Hence the map $h^{\flat}=h|_{\mathrm{Inf}_{\ast}(D_{\ast},C_{\ast})}:\mathrm{Inf}_{\ast}(D_{\ast},C_{\ast})\to \mathrm{Inf}_{\ast+1}(D'_{\ast},C'_{\ast})$ is an $R$-module homomorphism of degree 1. Thus, we obtain
\begin{equation*}
  f^{\flat}-g^{\flat}=dh^{\flat}+h^{\flat}d
\end{equation*}
on $\mathrm{Inf}_{\ast}(D_{\ast},C_{\ast})$, which is the desired result.
\end{proof}

\section{Marked posets}\label{section:marked}

Just as in the construction of simplicial complexes on posets, we will construct hypergraphs on marked posets. Furthermore, we provide some equivalent conditions for determining whether an element is in the hypergraph. However, the hypergraph construction on the category of marked posets is not always a functor. As an alternative, we will explore the functoriality of hypergraph construction within its subcategory, specifically, the category with compatible morphisms.

\subsection{Hypergraph construction on marked posets}

We will begin by reviewing some fundamental knowledge about posets. Let $X$ be a poset. We say \emph{$y$ covers $x$} if $x<y$ and there is no element $z$ such that $x<z<y$. An element $x\in X$ is \emph{initial} if there is no element covered by $x$.
For a subset $Y$ of $X$, the subset of $X$ \emph{covered by $Y$} is given by
\begin{equation*}
  \mathcal{C}_{X}(Y)=\{x\in X|x \text{ is covered by some element in }Y\}.
\end{equation*}
Let $\mathcal{P}_{X}(Y)=\{x\in X|x<y \text{ for some }y\in Y\}$. Let $\mathcal{I}_{X}(Y)$ be the set of initial elements in $\mathcal{P}(Y)$. The sets $\mathcal{P}_{X}(Y),\mathcal{I}_{X}(Y)$ and $\mathcal{C}_{X}(Y)$ a role in describing the construction of hypergraphs from a marked poset. If there's no ambiguity, we often denote these sets as $\mathcal{P}(Y)$, $\mathcal{I}(Y)$, and $\mathcal{C}(Y)$ for convenience.

\begin{definition}
A \emph{marked poset} $(X,S)$ is a poset $X$ equipped with a subset $S\subseteq X$, which is called the \emph{marked subset}. A \emph{morphism of marked posets} $f:(X,S)\to (Y,T)$ consists of a morphism of poset $f:X\to Y$ such that $f(S)\subseteq T$.
\end{definition}

Recall that $\mathscr{F}(X)$ is an abstract simplicial complex. For the sake of convenience, we always denote the element $\{x_{0},x_{1},\dots,x_{n}\}$ in $\mathscr{F}(X)$ as $x_{0}x_{1}\cdots x_{n}$ for $x_{0}<x_{1}<\cdots<x_{n}$. We can obtain a poset $\mathscr{N}(X)=\mathscr{P}\mathscr{F}(X)$ with the partial order such that $x_{0}x_{1}\cdots x_{n} \leq y_{0}y_{1}\cdots y_{n}$ if and only if $ x_{i}\leq y_{i}$ for $i=0,1,\dots,n$. Let $\mathscr{N}(X,S)=\{x_{0}x_{1}\cdots x_{n}\in \mathscr{N}(X)|x_{n}\in S\}$. Then $\mathscr{N}(X,S)$ is a subset of $\mathscr{N}(X)$. An initial element in $\mathscr{N}(X,S)$ is called \emph{$S$-successive}.

\begin{definition}
Let $(X,S)$ be a marked poset. The \emph{hypergraph construction} $\mathscr{H}(X,S)$ is a subset of $\mathscr{N}(X,S)$ defined by the induction that $\sigma\in \mathscr{H}(X,S)$ if $\sigma$ is $S$-successive or $\mathcal{C}(\{\sigma\})\subseteq \mathscr{H}(X,S)$.
\end{definition}

The following propositions provide us with alternative descriptions of the hyperedges in the construction $\mathscr{H}(X,S)$ on marked posets.

\begin{proposition}\label{proposition:description_p}
Let $\sigma\in \mathscr{N}(X,S)$. Then $\sigma\in \mathscr{H}(X,S)$ if and only if $\mathcal{P}(\{\sigma\})\subseteq \mathscr{H}(X,S)$.
\end{proposition}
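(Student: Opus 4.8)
The plan is to prove the two implications separately, with almost all of the work in the forward direction; the converse is immediate. For the converse, assume $\mathcal{P}(\{\sigma\})\subseteq\mathscr{H}(X,S)$. Every element covered by $\sigma$ is in particular strictly smaller than $\sigma$, so $\mathcal{C}(\{\sigma\})\subseteq\mathcal{P}(\{\sigma\})\subseteq\mathscr{H}(X,S)$, and then the defining clause of the hypergraph construction forces $\sigma\in\mathscr{H}(X,S)$. No induction is required here.

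For the forward implication I would first isolate the following closure property, which is the crux of the whole statement: whenever $\sigma\in\mathscr{H}(X,S)$ and $\tau$ is covered by $\sigma$ in $\mathscr{N}(X)$, one has $\tau\in\mathscr{H}(X,S)$. Indeed, since $\sigma$ covers $\tau$, the element $\sigma$ is not initial in $\mathscr{N}(X)$, hence not $S$-successive; therefore the only way $\sigma$ could have been placed into $\mathscr{H}(X,S)$ by the inductive definition is through the second alternative, namely $\mathcal{C}(\{\sigma\})\subseteq\mathscr{H}(X,S)$. As $\tau\in\mathcal{C}(\{\sigma\})$, this gives $\tau\in\mathscr{H}(X,S)$. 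In other words, $\mathscr{H}(X,S)$ is closed under passing to elements immediately below.

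Granting this, the forward implication follows by propagating the property down saturated chains. Fix $\sigma\in\mathscr{H}(X,S)$ and an arbitrary $\tau\in\mathcal{P}(\{\sigma\})$, so that $\tau<\sigma$ in $\mathscr{N}(X)$. By finiteness of $\mathscr{N}(X)$ I may choose a maximal chain $\tau=\tau_{0}<\tau_{1}<\cdots<\tau_{k}=\sigma$ in the interval from $\tau$ to $\sigma$, in which each $\tau_{i}$ is covered by $\tau_{i+1}$. Applying the closure property to $\sigma=\tau_{k}$ yields $\tau_{k-1}\in\mathscr{H}(X,S)$, applying it to $\tau_{k-1}$ yields $\tau_{k-2}\in\mathscr{H}(X,S)$, and after finitely many steps $\tau=\tau_{0}\in\mathscr{H}(X,S)$. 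Since $\tau$ was arbitrary, $\mathcal{P}(\{\sigma\})\subseteq\mathscr{H}(X,S)$. Equivalently, this last step can be organized as a single induction on the height of $\sigma$ in $\mathscr{N}(X)$.

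The step that needs the most care is the base case of the inductive definition. A priori an element of $\mathscr{H}(X,S)$ may have been included only because it is $S$-successive, in which case the covering condition $\mathcal{C}(\{\sigma\})\subseteq\mathscr{H}(X,S)$ is not available; this is precisely what could break the closure property. The point that dissolves the difficulty is that an $S$-successive element is initial in $\mathscr{N}(X)$, so nothing is covered by it, and hence it can never play the role of the upper element $\sigma$ that covers some $\tau$. The only remaining bookkeeping is to keep in mind that $\mathcal{C}$ and $\mathcal{P}$ are computed in the ambient poset $\mathscr{N}(X)$, so that the chain $\tau_{0}<\cdots<\tau_{k}$ and its covering relations are those of $\mathscr{N}(X)$ rather than of the subset $\mathscr{N}(X,S)$.
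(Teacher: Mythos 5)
Your proof is correct and takes essentially the same route as the paper's: the converse follows at once from the defining clause since $\mathcal{C}(\{\sigma\})\subseteq\mathcal{P}(\{\sigma\})$, and the forward direction descends a saturated covering chain, using at each step that an element of $\mathscr{H}(X,S)$ which covers something cannot be $S$-successive and hence must satisfy $\mathcal{C}(\{\cdot\})\subseteq\mathscr{H}(X,S)$. The only difference is organizational: the paper runs this chain induction inline, while you extract the single covering step as a closure lemma and explicitly flag why $S$-successive elements (being initial in $\mathscr{N}(X)$) never obstruct it, a point the paper leaves implicit.
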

\begin{proof}
``$\Rightarrow$''. Suppose $\sigma\in \mathscr{H}(X,S)$. If $\sigma$ is $S$-successive, the assertion is obviously true. Now, we consider the case that $\sigma$ is not an $S$-successive element. For any element $\tau\in \mathcal{P}(\{\sigma\})$, since $X$ is finite, we can find a sequence $\tau<\sigma_{k}<\cdots<\sigma_{1}<\sigma$ in such a way that each element in the sequence strict covers the one before it. Thus, $\sigma\in \mathscr{H}(X,S)$ implies $\sigma_{1}\in \mathscr{H}(X,S)$. By induction, we obtain $\tau\in \mathscr{H}(X,S)$.

``$\Leftarrow$''. Suppose $\mathcal{P}(\{\sigma\})\subseteq \mathscr{H}(X,S)$. If $\sigma$ is $S$-successive, then $\sigma\in \mathscr{H}(X,S)$. Otherwise, we have $\tau\in \mathscr{H}(X,S)$ for any $\tau<\sigma$. It follows that $\mathcal{C}(\{\sigma\})\subseteq \mathscr{H}(X,S)$. Thus, one has $\sigma\in \mathscr{H}(X,S)$.
\end{proof}

\begin{proposition}\label{proposition:initial}
Let $\sigma\in \mathscr{N}(X,S)$. Then $\sigma\in \mathscr{H}(X,S)$ if and only if $\mathcal{I}(\{\sigma\})\subseteq \mathscr{N}(X,S)$.
\end{proposition}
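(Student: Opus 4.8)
The plan is to reduce everything to Proposition \ref{proposition:description_p}, which characterizes $\sigma \in \mathscr{H}(X,S)$ by the condition $\mathcal{P}(\{\sigma\}) \subseteq \mathscr{H}(X,S)$ on the whole set of strict predecessors. Since $\mathcal{I}(\{\sigma\})$ is by definition the set of initial elements lying in $\mathcal{P}(\{\sigma\})$, we always have $\mathcal{I}(\{\sigma\}) \subseteq \mathcal{P}(\{\sigma\})$, so the forward implication is immediate: if $\sigma \in \mathscr{H}(X,S)$, then Proposition \ref{proposition:description_p} gives $\mathcal{P}(\{\sigma\}) \subseteq \mathscr{H}(X,S) \subseteq \mathscr{N}(X,S)$, whence $\mathcal{I}(\{\sigma\}) \subseteq \mathscr{N}(X,S)$. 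The genuine content is that checking only the initial elements already forces $\sigma \in \mathscr{H}(X,S)$.

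For the reverse implication I would argue by Noetherian induction along the finite poset $\mathscr{N}(X)$, assuming $\mathcal{I}(\{\sigma\}) \subseteq \mathscr{N}(X,S)$ and showing that every $\tau \le \sigma$ belongs to $\mathscr{H}(X,S)$; Proposition \ref{proposition:description_p} then yields $\sigma \in \mathscr{H}(X,S)$. Two simple facts organize the induction. First, monotonicity of initial elements: if $\tau \le \sigma$, then any initial element below $\tau$ is also below $\sigma$, so $\mathcal{I}(\{\tau\}) \subseteq \mathcal{I}(\{\sigma\}) \subseteq \mathscr{N}(X,S)$. Second, the identification of base cases: an element that is both initial in $\mathscr{N}(X)$ and contained in $\mathscr{N}(X,S)$ is exactly an $S$-successive element, hence lies in $\mathscr{H}(X,S)$ by construction. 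Thus a minimal $\tau \le \sigma$, being initial and (from the hypothesis applied through the first fact) marked, is $S$-successive; and for a non-minimal $\tau$, each element of $\mathcal{C}(\{\tau\})$ lies below $\sigma$, inherits the hypothesis by monotonicity, and is in $\mathscr{H}(X,S)$ by the inductive hypothesis, so that $\mathcal{C}(\{\tau\}) \subseteq \mathscr{H}(X,S)$ and the defining rule places $\tau$ in $\mathscr{H}(X,S)$.

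The step I expect to be delicate is exactly the propagation from the initial elements to all intermediate predecessors in this reverse induction: one must ensure that descending from $\sigma$ along the covering relation $\mathcal{C}$ never leaves $\mathscr{N}(X,S)$, so that every $\tau$ produced by the induction is itself marked and the descent terminates only at $S$-successive elements. In other words, the crux is to convert the hypothesis ``$\mathcal{I}(\{\sigma\}) \subseteq \mathscr{N}(X,S)$'', which only constrains the minimal elements beneath $\sigma$, into the stronger statement that the top vertex of every chain $\tau \le \sigma$ lies in $S$. I would isolate this as a separate lemma, proving it from the interaction between the marking and the covering relation of $\mathscr{N}(X)$, and only then feed it back into Proposition \ref{proposition:description_p} to finish.
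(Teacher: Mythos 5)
Your ``$\Rightarrow$'' direction is correct and coincides with the paper's (it is immediate from Proposition~\ref{proposition:description_p}). The gap is in ``$\Leftarrow$'', and it sits exactly at the step you flagged and deferred. Since $\mathscr{H}(X,S)$ is by definition a subset of $\mathscr{N}(X,S)$, the rule ``$\tau$ is $S$-successive or $\mathcal{C}(\{\tau\})\subseteq\mathscr{H}(X,S)$'' can only ever place elements of $\mathscr{N}(X,S)$ into $\mathscr{H}(X,S)$, so your induction needs every $\tau\leq\sigma$ to lie in $\mathscr{N}(X,S)$, while the hypothesis only controls the initial ones. The bridging lemma you propose (``all initial elements below $\sigma$ lie in $\mathscr{N}(X,S)$ implies all elements below $\sigma$ do'') is false, and in fact the ``$\Leftarrow$'' implication itself fails under the paper's definitions. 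Take $V=\{0,1,2,3\}$ and $\mathcal{H}=\{\{0,1\},\{0,2\},\{0,3\},\{0,1,2,3\}\}$, so that $\Delta\mathcal{H}$ consists of all nonempty subsets of $V$; let $(X,S)=(\mathscr{P}(\Delta\mathcal{H}),\mathcal{H})$ and $\sigma=\{0\}\{0,1,2,3\}\in\mathscr{N}(X,S)$. Then $\mathcal{I}(\{\sigma\})=\{\{0\}\{0,1\},\{0\}\{0,2\},\{0\}\{0,3\}\}\subseteq\mathscr{N}(X,S)$, so the hypothesis of ``$\Leftarrow$'' holds. But $\mathcal{C}(\{\sigma\})=\{\{0\}\{0,1,2\},\{0\}\{0,1,3\},\{0\}\{0,2,3\}\}$, and none of these lies in $\mathscr{N}(X,S)$ because $\mathcal{H}$ contains no $3$-element set; hence none lies in $\mathscr{H}(X,S)$, and since $\sigma$ is not initial (it covers $\{0\}\{0,1,2\}$) it is not $S$-successive, so $\sigma\notin\mathscr{H}(X,S)$. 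The marking $S$ is an arbitrary subset, and nothing in the covering structure propagates it upward through intermediate levels; no lemma of the kind you describe can exist.

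You should also know that the paper's own proof of ``$\Leftarrow$'' does not fill this gap: it argues ``by contradiction'' but negates the wrong statement, assuming some $\tau_{0}\in\mathcal{I}(\{\sigma\})$ is not in $\mathscr{N}(X,S)$ and deducing $\sigma\notin\mathscr{H}(X,S)$. That is the contrapositive of ``$\Rightarrow}$'', which was already proved, not a proof of ``$\Leftarrow$''. So your instinct was sound twice over: the step you isolated as delicate is the entire content of the reverse implication, and it is precisely where both your argument and the paper's break down. What your induction does establish (run it with the stronger hypothesis, using your two organizing facts verbatim) is the correct characterization: for $\sigma\in\mathscr{N}(X,S)$, one has $\sigma\in\mathscr{H}(X,S)$ if and only if $\mathcal{P}(\{\sigma\})\subseteq\mathscr{N}(X,S)$, i.e.\ every element below $\sigma$, not merely every initial one, lies in $\mathscr{N}(X,S)$. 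If you want a true and provable statement, prove that one instead.
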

\begin{proof}
``$\Rightarrow$''. Note that $\mathcal{I}(\{\sigma\})\subseteq \mathcal{P}(\{\sigma\})$. By Proposition \ref{proposition:description_p}, one has $\mathcal{I}(\{\sigma\})\subseteq \mathscr{H}(X,S)\subseteq \mathscr{N}(X,S)$.

``$\Leftarrow$''. We will employ proof by contradiction. Suppose there is an initial element $\tau_{0}$ in $\mathcal{I}({\sigma})$ that does not belong to $\mathscr{N}(X,S)$. Then we can find a sequence $\tau_{0}<\sigma_{k}<\cdots<\sigma_{1}<\sigma$ in such a way that each element in the sequence strict covers the one before it. By definition, we have $\tau_{0}\notin \mathscr{H}(X,S)$ because it is not $S$-successive. In view of $\tau_{0}\in C(\{\sigma_{k}\})$, one has $\sigma_{k}\notin \mathscr{H}(X,S)$. By induction, we obtain $\sigma\notin \mathscr{H}(X,S)$. This contradicts to our assumption.
\end{proof}
\subsection{The functorial property of hypergraph construction}

A \emph{graded poset} is a poset $X$ equipped with a rank function $\rho:X\to \mathbb{N}$ such that if $y$ covers $x$, then $\rho(y)=\rho(x)+1$. A \emph{morphism $f:X\to Y$ of graded posets} is a map of posets such that if $y$ covers $x$, then $\rho(f(y))\leq\rho(f(x))+1$. A morphism $f:X\to Y$ of graded posets is called \emph{compatible} if $\mathcal{C}(f(x))\subseteq f(\mathcal{C}(x))$ for any $x\in X$. It is worth noting that the compatible property indicates that $\rho(f(y))\leq\rho(f(x))+1$ for $x\in \mathcal{C}(y)$.

\begin{definition}
A \emph{marked graded poset} $(X,S)$ is a graded poset $X$ equipped with a subset $S\subseteq X$. A \emph{morphism of marked graded posets} $f:(X,S)\to (Y,T)$ consists of a morphism of graded poset $f:X\to Y$ such that $f(S)\subseteq T$. We say $f:(X,S)\to (Y,T)$ is \emph{compatible} if $f:X\to Y$ is compatible.
\end{definition}

\begin{lemma}
Let $f:(X,S)\to (Y,T)$ be a compatible morphism of marked graded posets. Then the map $\mathscr{H}(f):\mathscr{H}(X,S)\to \mathscr{H}(Y,T)$ given by $\mathscr{H}(f)(\sigma)=f(\sigma)$ is a morphism of hypergraphs.
\end{lemma}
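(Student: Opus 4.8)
The plan is to unwind what ``$\mathscr{H}(f)$ is a morphism of hypergraphs'' asks for: the underlying vertex map is $f$ itself, so the only thing to check is that $\mathscr{H}(f)$ is well defined on hyperedges, i.e. that $f(\sigma)\in\mathscr{H}(Y,T)$ whenever $\sigma\in\mathscr{H}(X,S)$. Fix such a $\sigma=x_{0}x_{1}\cdots x_{n}$, so $x_{n}\in S$. First I would record two preliminary observations. Since $f$ is order preserving, $f(\sigma)=\{f(x)\mid x\in\sigma\}$ is again a chain in $Y$, hence an element of $\mathscr{N}(Y)$ (possibly of smaller length than $\sigma$, because $f$ may identify adjacent vertices). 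Moreover $\max f(\sigma)=f(x_{n})$, and $f(S)\subseteq T$ gives $f(x_{n})\in T$, so $f(\sigma)\in\mathscr{N}(Y,T)$; in particular Proposition \ref{proposition:initial} is applicable to $f(\sigma)$.

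Next I would use Proposition \ref{proposition:initial} as the working criterion at both ends. Upstairs it says that, because $\sigma\in\mathscr{H}(X,S)$, every minimal element $\xi$ of $\mathscr{N}(X)$ with $\xi\le\sigma$ lies in $\mathcal{I}(\{\sigma\})\subseteq\mathscr{N}(X,S)$, i.e. $\max\xi\in S$. Downstairs it reduces the goal to a single assertion: every minimal element $\eta$ of $\mathscr{N}(Y)$ with $\eta\le f(\sigma)$ satisfies $\max\eta\in T$. Thus the whole argument funnels into the following key lemma: \emph{every minimal element $\eta$ of $\mathscr{N}(Y)$ below $f(\sigma)$ has the form $\eta=f(\xi)$ for some minimal element $\xi$ of $\mathscr{N}(X)$ below $\sigma$.} Granting the lemma, the proof closes at once: such a $\xi$ is initial below $\sigma$, so $\max\xi\in S$ by the criterion upstairs, and therefore $\max\eta=\max f(\xi)=f(\max\xi)\in T$; Proposition \ref{proposition:initial} then yields $f(\sigma)\in\mathscr{H}(Y,T)$.

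To prove the key lemma I would lift $\eta$ through $f$ coordinate by coordinate. A minimal element of $\mathscr{N}(Y)$ is exactly a saturated chain $w_{0}\lessdot w_{1}\lessdot\cdots$ whose bottom vertex is minimal in $Y$, sitting componentwise below $f(\sigma)$. Writing $f(\sigma)=u_{0}<\cdots<u_{m}$ and letting $\phi\colon\{0,\dots,n\}\to\{0,\dots,m\}$ be the nondecreasing surjection with $f(x_{i})=u_{\phi(i)}$ (so $\phi$ increases by $0$ or $1$ at each step), I would build a saturated chain $z_{0}\lessdot\cdots\lessdot z_{n}$ in $X$ by downward induction on $i$, maintaining the invariants $z_{i}\le x_{i}$ and $f(z_{i})=w_{\phi(i)}$. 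The top vertex $z_{n}$ is chosen by lifting a saturated descent from $f(x_{n})$ down to $\max\eta$ through $f$, which is possible since the compatibility condition $\mathcal{C}(f(x))\subseteq f(\mathcal{C}(x))$ guarantees that every vertex covered by an image is itself the image of a covered vertex; at a non-collapsing step $\phi(i)<\phi(i+1)$ the same compatibility produces the required $z_{i}\lessdot z_{i+1}$ with $f(z_{i})=w_{\phi(i)}$. Gradedness ensures that each covering step of $\xi$ maps either to a covering step or to an identification, so that $f(\xi)$ reproduces $\eta$ exactly and no rank gaps spoil the minimality of $\xi$.

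The main obstacle is precisely this construction, and within it the collapsing behaviour of $f$. Because $f$ may identify consecutive chain elements, the assignment $\sigma\mapsto f(\sigma)$ is \emph{not} a morphism of posets $\mathscr{N}(X)\to\mathscr{N}(Y)$, so minimal elements cannot be transported functorially; the lengths of $\sigma$ and $f(\sigma)$ differ, and $\xi$ must be reinflated to full length $n+1$ below $\sigma$. Compatibility handles the non-collapsing steps directly, but the collapsing steps $\phi(i)=\phi(i+1)$—where one must locate, below the already-built $z_{i+1}\le x_{i+1}$, a covered vertex $z_{i}\le x_{i}$ with $f(z_{i})=f(z_{i+1})$—are the delicate point and are not governed by compatibility alone; here I would exploit the chain relation $x_{i}<x_{i+1}$ in $\sigma$ together with gradedness to see that $f$ is constant on a saturated chain from $x_{i}$ to $x_{i+1}$ and that the descent cannot terminate prematurely at a minimal element. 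Organizing this bookkeeping around the two invariants above is the crux; once it is in place the key lemma, and hence the statement, follows.
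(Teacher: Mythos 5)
Your global strategy is a legitimate repackaging of the paper's: the paper also reduces everything to lifting, through $f$, the descent data sitting below $f(\sigma)$ in $\mathscr{N}(Y)$ back to descent data below $\sigma$ in $\mathscr{N}(X)$ --- it lifts covers one at a time inside the defining recursion, where you lift initial elements via Proposition~\ref{proposition:initial}. Your reduction itself is sound, and could even be weakened: you do not need the lifted chain $\xi$ to be initial, since by Proposition~\ref{proposition:description_p} every chain $\xi\le\sigma$ lies in $\mathscr{H}(X,S)\subseteq\mathscr{N}(X,S)$, so $\max\xi\in S$ already gives $\max\eta\in T$. The genuine gap is the key lemma, which you leave as a plan; and the plan cannot be completed, because the key lemma (even in the weakened form just described) is false under the paper's definitions --- and it fails already at \emph{non-collapsing} steps, not only at the collapsing steps you single out as delicate. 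Compatibility supplies, for each cover $w\lessdot f(z)$, \emph{some} cover $z'\lessdot z$ with $f(z')=w$, but it gives no control over where $z'$ sits relative to the other entries of the chain: $f$ preserves the order but does not reflect it, so your invariant $z_i\le x_i$ --- which is exactly what makes $\xi\le\sigma$ in the componentwise order of $\mathscr{N}(X)$ --- can be unattainable for every choice of lifts.

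Concretely, let $X=\{a,a^{*},b,b^{*},c\}$ with covers $a\lessdot b\lessdot c$ and $a^{*}\lessdot b^{*}\lessdot c$, and let $Y=\{p,q,q^{*},r\}$ be the diamond with covers $p\lessdot q\lessdot r$ and $p\lessdot q^{*}\lessdot r$; define $f(a)=f(a^{*})=p$, $f(b)=q$, $f(b^{*})=q^{*}$, $f(c)=r$. This $f$ is rank-preserving, injective on every chain (no collapsing occurs), a morphism of graded posets, and compatible: $\mathcal{C}(f(c))=\{q,q^{*}\}=f(\{b,b^{*}\})=f(\mathcal{C}(c))$, and the condition is immediate at the other elements. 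Take $S=\{b,c\}$ and $T=f(S)=\{q,r\}$. Since $b^{*}\not>a$, the only chain of $\mathscr{N}(X)$ strictly below $\sigma=ac$ is $ab$, which is $S$-successive; hence $\sigma\in\mathscr{H}(X,S)$. But below $f(\sigma)=pr$ sit the two initial elements $pq$ and $pq^{*}$, and $q^{*}\notin T$, so $f(\sigma)\notin\mathscr{H}(Y,T)$ by Proposition~\ref{proposition:initial}. Your key lemma fails here because $pq^{*}$ is not the image of any chain below $ac$: the unique lift $b^{*}\lessdot c$ of $q^{*}\lessdot r$ has $b^{*}\not\ge a$. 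I should stress that this is not a defect of your route alone: the paper's own proof asserts precisely the step this example contradicts (``for any $\tau_{1}\in\mathcal{C}(\{f(\sigma)\})$, due to the compatibility of $f$, there exists $\sigma_{1}\in\mathcal{C}(\{\sigma\})$ such that $f(\sigma_{1})=\tau_{1}$''), so you have correctly located the crux --- but the conclusion is that no amount of bookkeeping with gradedness and the chain relation can rescue it. What is needed is a stronger hypothesis on $f$ (some form of order- or cover-reflection along hyperedges, which is in effect what your key lemma postulates), and under such a hypothesis either your argument or the paper's would go through.
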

\begin{proof}
Let $\sigma=\{x_{0},x_{1},\dots,x_{n}\}\in \mathscr{H}(X,S)$. We have $x_{n}\in S$. It follows that $f(x_{n})\in T$. Thus, one has $f(\sigma)\in \mathscr{N}(Y,T)$.
If $\sigma$ is $S$-successive, by definition, the set $f(\sigma)$ is a $T$-successive element. Then we have $f(\sigma)\in \mathscr{H}(Y,T)$. Suppose $\sigma$ is not $S$-successive. If $f(\sigma)$ is $T$-successive, we have $f(\sigma)\in \mathscr{H}(Y,T)$. Otherwise, for any $\tau_{1}\in \mathcal{C}(\{f(\sigma)\})$, due to the compatibility of $f$, there exists $\sigma_{1}\in \mathcal{C}(\{\sigma\})$ such that $f(\sigma_{1})=\tau_{1}$. By definition, we have $\sigma_{1}\in \mathscr{H}(X,S)$. It follows that $\tau_{1}\in \mathscr{N}(Y,T)$. If $\tau_{1}$ is $T$-successive, we have $\tau_{1}\in \mathscr{H}(Y,T)$. Otherwise, for any $\tau_{2}\in \mathcal{C}(\{\tau_{1}\})$, there exists $\sigma_{2}\in \mathcal{C}(\{\sigma_{1}\})$ such that $f(\sigma_{2})=\tau_{2}$. Due to the finiteness of the posets and by induction, we can find an integer $k$ such that $f(\sigma_{k})=\tau_{k}$ and $\tau_{k}$ is $T$-successive. It follows that $\tau_{k}\in \mathscr{H}(Y,T)$.
Because for any $\tau_k\in C({\tau_{k-1}})$, we have $\tau_k\in \mathscr{H}(Y,T)$, it follows that $\tau_{k-1}\in \mathscr{H}(Y,T)$. By induction, we obtain $\tau\in \mathscr{H}(Y,T)$. The desired result follows.
\end{proof}

Consider the category $\mathbf{GrPos^{+}}$ of marked graded posets whose objects are marked graded posets, and whose morphisms are the compatible morphisms of marked graded posets.

\begin{proposition}\label{proposition:functor}
The construction $\mathscr{H}:\mathbf{GrPos^{+}}\to \mathbf{Hyp}$ is a functor from the category of marked graded posets to the category of hypergraphs.
\end{proposition}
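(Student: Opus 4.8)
The plan is to verify the two functoriality axioms, since the action of $\mathscr{H}$ on objects is the hypergraph construction $\mathscr{H}(X,S)$, and its action on a compatible morphism $f$ is already supplied by the preceding lemma, which guarantees that $\mathscr{H}(f)\colon\mathscr{H}(X,S)\to\mathscr{H}(Y,T)$, $\sigma\mapsto f(\sigma)$, is a well-defined morphism of hypergraphs. Preservation of identities is immediate: for $\sigma=x_{0}x_{1}\cdots x_{n}\in\mathscr{H}(X,S)$ one has $\mathscr{H}(\mathrm{id}_{(X,S)})(\sigma)=\mathrm{id}(\sigma)=\sigma$, so $\mathscr{H}(\mathrm{id}_{(X,S)})=\mathrm{id}_{\mathscr{H}(X,S)}$. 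The real content is preservation of composition.

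Given compatible morphisms $f\colon(X,S)\to(Y,T)$ and $g\colon(Y,T)\to(Z,U)$, I would first check that $g\circ f$ is again a morphism in $\mathbf{GrPos^{+}}$, so that $\mathscr{H}(g\circ f)$ is defined (this is also what makes $\mathbf{GrPos^{+}}$ a category). Three points need verification: that $g\circ f$ preserves the marked subset, which follows from $(g\circ f)(S)=g(f(S))\subseteq g(T)\subseteq U$; that $g\circ f$ is a morphism of graded posets; and that $g\circ f$ is compatible. Once this is settled, the identity $\mathscr{H}(g\circ f)=\mathscr{H}(g)\circ\mathscr{H}(f)$ reduces to the set-level equality $(g\circ f)(\sigma)=g(f(\sigma))$ for every $\sigma\in\mathscr{H}(X,S)$. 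This holds because $f(\sigma)\in\mathscr{H}(Y,T)$ by the lemma, so $\mathscr{H}(g)$ can be applied to it, and ordinary function composition gives $\{gf(x_{0}),\dots,gf(x_{n})\}=g(\{f(x_{0}),\dots,f(x_{n})\})$; note that this is insensitive to any collapsing of the chain caused by non-injectivity of $f$.

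I expect the compatibility of the composite to be the main obstacle. The argument is a chase down the covering relations: take $w\in\mathcal{C}(\{(g\circ f)(x)\})$; compatibility of $g$ gives $w=g(v)$ for some $v\in\mathcal{C}(\{f(x)\})$, and compatibility of $f$ then gives $v=f(u)$ for some $u\in\mathcal{C}(\{x\})$, whence $w=(g\circ f)(u)\in(g\circ f)(\mathcal{C}(\{x\}))$. This proves $\mathcal{C}(\{(g\circ f)(x)\})\subseteq(g\circ f)(\mathcal{C}(\{x\}))$. For the graded-poset condition, order-preservation of $g\circ f$ is clear, and for the rank condition I would argue as follows: if $y$ covers $x$, then $f(x)\leq f(y)$ with $\rho(f(y))\leq\rho(f(x))+1$; either $f(x)=f(y)$, in which case $(g\circ f)(x)=(g\circ f)(y)$, or $f(x)<f(y)$, in which case the rank gap is forced to be exactly $1$, so $f(y)$ covers $f(x)$ (any saturated chain between comparable elements has length equal to the rank difference), and then the morphism $g$ yields $\rho((g\circ f)(y))\leq\rho((g\circ f)(x))+1$. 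Alternatively, having already shown $g\circ f$ compatible, one may simply invoke the remark that a compatible morphism automatically satisfies the rank condition. Assembling these pieces gives both functoriality axioms, so $\mathscr{H}$ is a functor.
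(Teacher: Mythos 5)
Your proof is correct and takes the only approach available: the paper's own ``proof'' of this proposition is literally the single sentence ``The proof can be conducted step by step according to the definition,'' and your proposal supplies exactly those steps --- preservation of identities, closure of $\mathbf{GrPos^{+}}$ under composition (the covering-relation chase for compatibility of $g\circ f$, and the rank condition via the case analysis $f(x)=f(y)$ versus $f(x)\lessdot f(y)$ forced by gradedness), and the set-level identity $\mathscr{H}(g\circ f)=\mathscr{H}(g)\circ\mathscr{H}(f)$. One caution: your ``alternative'' route misreads the paper's remark, since compatibility of a bare poset map does \emph{not} by itself imply the rank condition (the remark presupposes that $f$ is already a morphism of graded posets, so it is not available as an independent shortcut), but this costs nothing because your primary argument is self-contained and sound.
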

\begin{proof}
The proof can be conducted step by step according to the definition.
\end{proof}

\section{Subdivision of hypergraphs}\label{section:subdivision}

In this section, we introduce a functor $\mathscr{P}^{+}:\mathbf{Hyp}\to\mathbf{GrPos^{+}}$ from the category of hypergraphs to the category of marked graded posets. The subdivision of hypergraphs is defined by the functor $\mathscr{H}\mathscr{P}^{+}$.
We provide examples to illustrate the computation of the subdivision of hypergraphs and explore the method for carrying out the computation.

\subsection{The subdivision functor}
Let $\mathcal{H}$ be a hypergraph, and let $\Delta\mathcal{H}$ be the simplicial closure of $\mathcal{H}$. We can obtain a graded marked poset $(\mathscr{P}(\Delta\mathcal{H}),\mathcal{H})$ with the rank function $\rho:\mathscr{P}(\Delta\mathcal{H})\to \mathbb{N}$ given by $\rho(\sigma)=\#|\sigma|-1$. Here, $\#|\sigma|$ denotes the number of elements in $\sigma$. Moreover, for a map of hypergraphs $\phi:\mathcal{H}\to \mathcal{H}'$, there is a morphism of graded marked posets $\mathscr{P}^{+}(\phi):(\mathscr{P}(\Delta\mathcal{H}),\mathcal{H})\to (\mathscr{P}(\Delta\mathcal{H}'),\mathcal{H}')$ given by $\mathscr{P}(\phi)(\sigma)=\phi(\sigma)$. The map $\mathscr{P}(\phi):\mathscr{P}(\Delta\mathcal{H})\to \mathscr{P}(\Delta\mathcal{H}')$ is a simplicial map.

\begin{lemma}
The morphism $\mathscr{P}^{+}(\phi):(\mathscr{P}(\Delta\mathcal{H}),\mathcal{H})\to (\mathscr{P}(\Delta\mathcal{H}'),\mathcal{H}')$ induced by $\phi:\mathcal{H}\to \mathcal{H}'$ is compatible.
\end{lemma}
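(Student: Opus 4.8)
The plan is to unwind compatibility into a statement about codimension-one faces and then match faces through $\phi$. By definition, compatibility of $\mathscr{P}^{+}(\phi)$ is compatibility of the underlying poset map $\mathscr{P}(\phi)$, i.e. the inclusion $\mathcal{C}(\{\mathscr{P}(\phi)(\sigma)\})\subseteq \mathscr{P}(\phi)(\mathcal{C}(\{\sigma\}))$ for every $\sigma\in\mathscr{P}(\Delta\mathcal{H})$, where $\mathscr{P}(\phi)(\sigma)=\phi(\sigma)$. First I would make the covering relation explicit: since the order on $\mathscr{P}(\Delta\mathcal{H})$ is inclusion and $\rho(\sigma)=\#|\sigma|-1$, an element is covered by $\sigma$ exactly when it is a codimension-one face $\sigma\setminus\{v\}$ with $v\in\sigma$. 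Hence $\mathcal{C}(\{\sigma\})=\{\sigma\setminus\{v\}:v\in\sigma\}$ and, in the target, $\mathcal{C}(\{\phi(\sigma)\})=\{\phi(\sigma)\setminus\{w\}:w\in\phi(\sigma)\}$. When $\#|\sigma|=1$ both sides of the desired inclusion are empty, so I may assume $\#|\sigma|\geq 2$.

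Next I would exhibit, for each codimension-one face $\phi(\sigma)\setminus\{w\}$ of $\phi(\sigma)$, a preimage lying in $\mathcal{C}(\{\sigma\})$. As $w\in\phi(\sigma)$, I choose a vertex $v\in\sigma$ with $\phi(v)=w$ and take the face $\sigma\setminus\{v\}\in\mathcal{C}(\{\sigma\})$. It then suffices to verify the identity $\phi(\sigma\setminus\{v\})=\phi(\sigma)\setminus\{w\}$, for this exhibits $\phi(\sigma)\setminus\{w\}=\mathscr{P}(\phi)(\sigma\setminus\{v\})\in\mathscr{P}(\phi)(\mathcal{C}(\{\sigma\}))$ and yields the inclusion.

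The main obstacle, and essentially the whole content, is this identity, which can fail when $\phi$ identifies two vertices of $\sigma$: if some $v'\neq v$ in $\sigma$ also satisfies $\phi(v')=w$, then $w$ still appears in $\phi(\sigma\setminus\{v\})$ and the face $\phi(\sigma)\setminus\{w\}$ is missed. The key point is therefore that $\phi$ restricts to an injection on the vertices of each simplex of $\Delta\mathcal{H}$, i.e. the simplicial map $\mathscr{P}(\phi)$ is non-degenerate. This holds because $\phi$ is a morphism of hypergraphs: it carries every hyperedge to a hyperedge injectively, and injectivity on a hyperedge passes to all of its faces, since each simplex of $\Delta\mathcal{H}$ is contained in a hyperedge. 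With $\phi$ injective on $\sigma$, the vertex $v$ is the unique preimage of $w$ in $\sigma$, so deleting $v$ removes exactly $w$ from the image and $\phi(\sigma\setminus\{v\})=\phi(\sigma)\setminus\{w\}$.

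Finally, I would note that this reasoning in fact gives the equality $\mathcal{C}(\{\phi(\sigma)\})=\mathscr{P}(\phi)(\mathcal{C}(\{\sigma\}))$: injectivity on $\sigma$ makes $w\mapsto v$ a bijection between the vertices of $\phi(\sigma)$ and those of $\sigma$, so the codimension-one faces correspond bijectively. This immediately gives the required inclusion and, as a byproduct, shows $\mathscr{P}(\phi)$ is rank-preserving on covers, which is consistent with $\mathscr{P}^{+}(\phi)$ being a morphism of graded posets. Combining these steps establishes that $\mathscr{P}^{+}(\phi)$ is compatible.
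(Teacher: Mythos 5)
Your proof is correct and follows essentially the same route as the paper's: identify the covers of $\phi(\sigma)$ in $\mathscr{P}(\Delta\mathcal{H}')$ with its codimension-one faces and exhibit each one as the image of a codimension-one face of $\sigma$, i.e.\ as an element of $\mathscr{P}(\phi)(\mathcal{C}(\{\sigma\}))$. The paper compresses this into the single identity $\tau=\partial_{i}\phi(\sigma)=\phi(\partial_{i}\sigma)$; your explicit treatment of the potentially degenerate case (where $\phi$ identifies two vertices of $\sigma$) makes visible the vertex-injectivity of hypergraph morphisms on hyperedges that this identity silently uses, which is a genuine hypothesis --- without it the inclusion $\mathcal{C}(\{\phi(\sigma)\})\subseteq\phi(\mathcal{C}(\{\sigma\}))$ can fail --- so your added care is a refinement of, not a departure from, the paper's argument.
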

\begin{proof}
For any element $\sigma\in \Delta\mathcal{H}$ and any $\tau\in \mathcal{C}(\{\phi(\sigma)\})$, we have $\tau=\partial_{i}\phi(\sigma)$ for some face map $\partial_{i}$ on $\Delta\mathcal{H}$. It follows that $\tau=\phi(\partial_{i}\sigma)\subseteq \phi(\mathcal{C}(\{\sigma\}))$. Thus the morphism $\mathscr{P}(\phi)$ is compatible.
\end{proof}
It is not difficult to verify the functorial property of the construction $\mathscr{P}^{+}$.
\begin{proposition}\label{proposition:functor_marked}
The construction $\mathscr{P}^{+}:\mathbf{Hyp}\to \mathbf{GrPos^{+}}$ is a functor from the category of hypergraphs to the category of marked posets.
\end{proposition}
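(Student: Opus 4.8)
The plan is to verify the three defining properties of a functor in turn: that $\mathscr{P}^{+}$ sends each object of $\mathbf{Hyp}$ to a genuine object of $\mathbf{GrPos^{+}}$, that it sends each hypergraph morphism to a compatible morphism of marked graded posets, and that it respects identities and composition. Since the preceding lemma already establishes that $\mathscr{P}^{+}(\phi)$ is compatible, the remaining work is the well-definedness on objects, the remaining structural conditions on morphisms, and the two functoriality axioms, all of which reduce to the single observation that $\mathscr{P}^{+}$ is built by applying the underlying vertex maps to simplices.

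First I would check well-definedness on objects. The face poset $\mathscr{P}(\Delta\mathcal{H})$ is a poset under inclusion in the standard way, and $\mathcal{H}\subseteq\Delta\mathcal{H}$ shows that $\mathcal{H}$ is a legitimate marked subset. It then remains to confirm that $\rho(\sigma)=\#|\sigma|-1$ is a rank function. The key point is that in the face poset, $\tau$ covers $\sigma$ precisely when $\sigma\subsetneq\tau$ with $\#|\tau|=\#|\sigma|+1$; hence $\rho(\tau)=\rho(\sigma)+1$, exactly the condition defining a graded poset.

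Next I would verify the structural conditions on morphisms. Writing $f$ for the underlying vertex map of $\phi$, the induced map $\mathscr{P}(\phi)(\sigma)=\phi(\sigma)$ preserves inclusions and is simplicial, so it is a morphism of posets $\mathscr{P}(\Delta\mathcal{H})\to\mathscr{P}(\Delta\mathcal{H}')$. For the graded-poset-morphism inequality, if $\tau$ covers $\sigma$ then $\tau=\sigma\cup\{v\}$, so $\phi(\tau)=\phi(\sigma)\cup\{f(v)\}$ contains at most one vertex more than $\phi(\sigma)$; thus $\rho(\phi(\tau))\leq\rho(\phi(\sigma))+1$. The marked condition $\phi(\mathcal{H})\subseteq\mathcal{H}'$ is precisely the hypothesis that $\phi$ is a morphism of hypergraphs, and compatibility is supplied by the preceding lemma.

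Finally, functoriality is immediate from the description $\mathscr{P}^{+}(\phi)(\sigma)=\phi(\sigma)$: the identity hypergraph morphism induces the identity map on simplices, and for composable $\phi,\psi$ the underlying vertex maps compose, so $\mathscr{P}^{+}(\psi\circ\phi)=\mathscr{P}^{+}(\psi)\circ\mathscr{P}^{+}(\phi)$. I do not expect a genuine obstacle here; the only point requiring care is the grading, namely the two rank computations under covering relations appearing above, both of which hinge on the elementary fact that covers in a face poset differ by exactly a single vertex.
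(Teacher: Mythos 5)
Your proof is correct and matches the paper's approach: the paper simply declares this verification ``not difficult'' and omits it, relying (as you do) on the preceding lemma for compatibility of $\mathscr{P}^{+}(\phi)$. Your write-up supplies exactly the routine details the paper leaves implicit --- the cover relation in the face poset differing by a single vertex, the rank inequality for induced maps, and the identity/composition axioms.
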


\begin{proposition}\label{proposition:equal_hypergraph}
Let $\mathcal{H}$ be a hypergraph. Suppose $\mathcal{K}$ is a simplicial complex such that $\mathcal{H}\subseteq \mathcal{K}$. Then we have
\begin{equation*}
  \mathscr{H}(\mathscr{P}(\Delta\mathcal{H}),\mathcal{H})=\mathscr{H}(\mathscr{P}(\mathcal{K}),\mathcal{H})
\end{equation*}
\end{proposition}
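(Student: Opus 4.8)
The plan is to reduce the asserted equality to the membership criterion of Proposition~\ref{proposition:initial}, after observing that every piece of combinatorial data entering the construction $\mathscr{H}(-,\mathcal{H})$ below a marked chain involves only faces of that chain's top simplex. First I would record that $\Delta\mathcal{H}\subseteq\mathcal{K}$: for $\sigma\in\mathcal{H}\subseteq\mathcal{K}$, every nonempty $\tau\subseteq\sigma$ lies in $\mathcal{K}$ because $\mathcal{K}$ is a simplicial complex, so $\tau\in\Delta\mathcal{H}$ forces $\tau\in\mathcal{K}$. Hence $\mathscr{P}(\Delta\mathcal{H})$ is a subposet of $\mathscr{P}(\mathcal{K})$, and both constructions live inside $\mathscr{N}(\mathscr{P}(\mathcal{K}))$.

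The first substantive step is to check that the two ambient marked sets coincide, i.e. $\mathscr{N}(\mathscr{P}(\Delta\mathcal{H}),\mathcal{H})=\mathscr{N}(\mathscr{P}(\mathcal{K}),\mathcal{H})$. Indeed, a chain $\sigma_{0}\subsetneq\cdots\subsetneq\sigma_{n}$ lies in $\mathscr{N}(\mathscr{P}(\mathcal{K}),\mathcal{H})$ exactly when $\sigma_{n}\in\mathcal{H}$; but then each $\sigma_{i}\subseteq\sigma_{n}$ is a nonempty face of an element of $\mathcal{H}$, so $\sigma_{i}\in\Delta\mathcal{H}$ and the whole chain already lies in $\mathscr{P}(\Delta\mathcal{H})$. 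The reverse inclusion is immediate from $\Delta\mathcal{H}\subseteq\mathcal{K}$. Denote this common set by $N$; both $\mathscr{H}(\mathscr{P}(\Delta\mathcal{H}),\mathcal{H})$ and $\mathscr{H}(\mathscr{P}(\mathcal{K}),\mathcal{H})$ are subsets of $N$.

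Next, fix $\sigma=\sigma_{0}\cdots\sigma_{n}\in N$ and compare the down-sets $\mathcal{P}(\{\sigma\})$ computed in $\mathscr{N}(\mathscr{P}(\Delta\mathcal{H}))$ and in $\mathscr{N}(\mathscr{P}(\mathcal{K}))$. By the componentwise order, any $\tau=\tau_{0}\cdots\tau_{n}<\sigma$ satisfies $\tau_{i}\subseteq\sigma_{i}\subseteq\sigma_{n}$ for all $i$; since $\sigma_{n}\in\mathcal{H}$ and both $\Delta\mathcal{H}$ and $\mathcal{K}$ contain every nonempty face of $\sigma_{n}$, each such $\tau$ is a chain in both posets, and conversely. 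As the order is componentwise and hence independent of the ambient complex, $\mathcal{P}(\{\sigma\})$ is literally the same subset in both cases, and therefore so is its set of initial (minimal) elements $\mathcal{I}(\{\sigma\})$.

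Finally I would invoke Proposition~\ref{proposition:initial}: for $\sigma\in N$ it gives $\sigma\in\mathscr{H}(\mathscr{P}(\Delta\mathcal{H}),\mathcal{H})\iff\mathcal{I}(\{\sigma\})\subseteq N$ and, identically, $\sigma\in\mathscr{H}(\mathscr{P}(\mathcal{K}),\mathcal{H})\iff\mathcal{I}(\{\sigma\})\subseteq N$, using the equality of the marked sets and of $\mathcal{I}(\{\sigma\})$ from the previous steps. The two membership conditions coincide, so the two subsets of $N$ are equal, which is the claim. I expect the middle step to be the main obstacle: one must be confident that enlarging $\Delta\mathcal{H}$ to $\mathcal{K}$ introduces no new chain below a marked $\sigma$ and does not change which chains are minimal. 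This is precisely where the componentwise definition of the order on $\mathscr{N}$ together with the marked condition $\sigma_{n}\in\mathcal{H}$—which confines everything relevant to faces of $\sigma_{n}$—does the work.
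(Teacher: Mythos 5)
Your proof is correct, but it takes a genuinely different route from the paper's. The paper's own proof is categorical and much terser: it observes that $\Delta\mathcal{H}$ is a subcomplex of $\mathcal{K}$, so the inclusion $j\colon(\mathscr{P}(\Delta\mathcal{H}),\mathcal{H})\hookrightarrow(\mathscr{P}(\mathcal{K}),\mathcal{H})$ is a compatible morphism of marked graded posets; functoriality of $\mathscr{H}$ (Proposition~\ref{proposition:functor}) then yields the inclusion $\mathscr{H}(\mathscr{P}(\Delta\mathcal{H}),\mathcal{H})\subseteq\mathscr{H}(\mathscr{P}(\mathcal{K}),\mathcal{H})$, and the reverse inclusion is asserted in a single sentence, namely that every hyperedge $\sigma_{0}\sigma_{1}\cdots\sigma_{n}$ of $\mathscr{H}(\mathscr{P}(\mathcal{K}),\mathcal{H})$ has the preimage $\sigma_{0}\sigma_{1}\cdots\sigma_{n}$ in $\mathscr{H}(\mathscr{P}(\Delta\mathcal{H}),\mathcal{H})$. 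You instead bypass functoriality entirely and run the membership criterion of Proposition~\ref{proposition:initial} on both sides, after checking that the marked chain sets $\mathscr{N}(\cdot,\mathcal{H})$, the down-sets $\mathcal{P}(\{\sigma\})$, and hence the initial sets $\mathcal{I}(\{\sigma\})$ are literally identical in the two ambient posets---all because the marked condition $\sigma_{n}\in\mathcal{H}$ confines everything relevant to faces of $\sigma_{n}$, which lie in $\Delta\mathcal{H}$. What your approach buys is precisely the rigor the paper omits: the ``has the preimage'' claim is exactly the nontrivial direction (membership in the larger construction implies membership in the smaller one), and your middle steps supply the argument for it, treating both inclusions symmetrically in one stroke. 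What the paper's approach buys is brevity, since one inclusion comes for free from the already-established functor. Both arguments ultimately rest on the same structural observation, but yours is the more complete one.
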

\begin{proof}
Note that $\Delta\mathcal{H}$ is a sub complex of $\mathcal{K}$. We have a morphism $j:(\mathscr{P}(\Delta\mathcal{H}),\mathcal{H})\hookrightarrow (\mathscr{P}(\mathcal{K}),\mathcal{H})$ of graded marked posets. It induces an inclusion of hypergraphs $\mathscr{H}(j):\mathscr{H}(\mathscr{P}(\Delta\mathcal{H}),\mathcal{H})\hookrightarrow \mathscr{H}(\mathscr{P}(\mathcal{K}),\mathcal{H})$. Moreover, the morphism $\mathscr{H}(j)$ is an identity map, as each hyperedge $\sigma_{0}\sigma_{1}\cdots\sigma_{n}$ in $\mathscr{H}(\mathscr{P}(\mathcal{K}),\mathcal{H})$ has the preimage $\sigma_{0}\sigma_{1}\cdots\sigma_{n}$ in $\mathscr{H}(\mathscr{P}(\Delta\mathcal{H}),\mathcal{H})$.
\end{proof}

Proposition \ref{proposition:equal_hypergraph} demonstrates that the construction of hypergraphs for $(\mathscr{P}(\mathcal{K}), \mathcal{H})$ does not depend on the choice of the simplicial complex $\mathcal{K}$. This guarantees that the choice of $\Delta \mathcal{H}$ does not alter the construction of hypergraphs. Consequently, we can define the subdivision of hypergraphs as follows.

\begin{definition}\label{definition:subdivision}
The \emph{subdivision of hypergraphs} is the functor $\mathrm{sd}=\mathscr{H}\mathscr{P}^{+}:\mathbf{Hyp}\to \mathbf{Hyp}$ is a functor on the category of hypergraphs.
\end{definition}

The following proposition shows that the subdivision of hypergraphs can be reduced to the subdivision of simplicial complexes.
\begin{proposition}\label{proposition:subdivision_closure}
$\Delta (\mathrm{sd}(\mathcal{H}))=\mathrm{sd}(\Delta\mathcal{H})$.
\end{proposition}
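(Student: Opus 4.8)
The plan is to translate both sides of the asserted equality into statements about chains in the poset $X=\mathscr{P}(\Delta\mathcal{H})$. First I would record the identification $\mathscr{F}(X)=\mathscr{F}\mathscr{P}(\Delta\mathcal{H})=\mathrm{sd}(\Delta\mathcal{H})$, so that the simplices of $\mathrm{sd}(\Delta\mathcal{H})$ are precisely the chains $\sigma_{0}\subsetneq\cdots\subsetneq\sigma_{n}$ of simplices of $\Delta\mathcal{H}$. Since $\mathrm{sd}(\mathcal{H})=\mathscr{H}(X,\mathcal{H})\subseteq\mathscr{N}(X,\mathcal{H})\subseteq\mathscr{N}(X)$, every hyperedge of $\mathrm{sd}(\mathcal{H})$ is such a chain, viewed as a simplex of $\mathscr{F}(X)$ whose vertex set is its own set of elements. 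Consequently $\Delta(\mathrm{sd}(\mathcal{H}))$ consists exactly of the nonempty subchains of hyperedges of $\mathscr{H}(X,\mathcal{H})$, and the claim reduces to showing that the set of these subchains coincides with the set of all chains of $X$. One inclusion is immediate: a subchain of a chain is again a chain, whence $\Delta(\mathrm{sd}(\mathcal{H}))\subseteq\mathscr{F}(X)=\mathrm{sd}(\Delta\mathcal{H})$.

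For the reverse inclusion I would fix an arbitrary chain $c:\sigma_{0}\subsetneq\cdots\subsetneq\sigma_{n}$ in $X$ and exhibit a hyperedge of $\mathscr{H}(X,\mathcal{H})$ containing it as a subchain. Because $\Delta\mathcal{H}$ is the simplicial closure, there is $\sigma\in\mathcal{H}$ with $\sigma_{n}\subseteq\sigma$. I would then refine $c$ to a maximal chain (complete flag) $\hat{c}:\tau_{0}\subsetneq\cdots\subsetneq\tau_{m}=\sigma$ with $\#|\tau_{i}|=i+1$ that contains every $\sigma_{j}$. Such a refinement exists because every intermediate simplex is a face of $\sigma$, hence lies in $\Delta\mathcal{H}$ and is a legitimate element of $X$. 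The whole argument then rests on showing $\hat{c}\in\mathscr{H}(X,\mathcal{H})$; granting this, $c$ is a subchain of $\hat{c}$, so $c\in\Delta(\mathrm{sd}(\mathcal{H}))$ and we are done.

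To prove $\hat{c}\in\mathscr{H}(X,\mathcal{H})$ I would show that it is $S$-successive, i.e.\ minimal in $\mathscr{N}(X,\mathcal{H})$, by analysing the elementwise covering relation of $\mathscr{N}(X)$. An element covered by $\tau_{0}\cdots\tau_{m}$ is obtained by replacing a single $\tau_{j}$ with a simplex covered by it (one vertex fewer) while keeping a strictly increasing chain. A short case analysis shows that such a lowering is possible unless $\#|\tau_{0}|=1$ and $\#|\tau_{j}|-\#|\tau_{j-1}|=1$ for all $j$, i.e.\ unless the chain is a complete flag. Hence $\hat{c}$ is minimal in $\mathscr{N}(X)$, a fortiori minimal in the subposet $\mathscr{N}(X,\mathcal{H})$ (its top $\sigma$ lies in $\mathcal{H}$), so $\hat{c}$ is $S$-successive and therefore belongs to $\mathscr{H}(X,\mathcal{H})$. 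Alternatively one may invoke Proposition~\ref{proposition:initial} and verify directly that $\mathcal{I}(\{\hat{c}\})\subseteq\mathscr{N}(X,\mathcal{H})$, since the initial elements below $\hat{c}$ are again complete flags refining within the $\tau_{i}$ and their tops equal $\sigma\in\mathcal{H}$; either route reduces to the same flag computation.

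I expect the principal obstacle to be precisely this combinatorial description of the covering relation of the elementwise-ordered poset $\mathscr{N}(X)$ — identifying its minimal elements with the complete flags — since this is exactly what licenses replacing an arbitrary chain by a genuine hyperedge of the subdivision. It is worth remarking that the elementwise order, rather than inclusion, is the operative one here: under the inclusion order the construction would instead force every vertex of a hyperedge to lie in $\mathcal{H}$, and the equality would already fail for a hypergraph consisting of a single top-dimensional hyperedge, so the flag analysis above is where the specific structure of $\mathscr{N}(X)$ is genuinely used.
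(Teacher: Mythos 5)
Your proof is correct and takes essentially the same route as the paper's: the easy inclusion follows because subchains of chains are chains, and the reverse inclusion extends an arbitrary chain of $\mathscr{P}(\Delta\mathcal{H})$ to a complete flag terminating at some $\sigma\in\mathcal{H}$, which is $\mathcal{H}$-successive and hence lies in $\mathrm{sd}(\mathcal{H})$. The only difference is one of detail: you make explicit the flag-minimality argument (that complete flags are initial in $\mathscr{N}(\mathscr{P}(\Delta\mathcal{H}),\mathcal{H})$), which the paper asserts without justification.
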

\begin{proof}
If $\sigma_{0}\sigma_{1}\cdots\sigma_{n}\in \mathrm{sd}(\Delta\mathcal{H})$, then $\sigma_{n}\in \Delta\mathcal{H}$. By definition, $\sigma_{n}$ is a subset of some element $\tau_{m}\in \mathcal{H}$.
Then we can extend $\sigma_{0}\sigma_{1}\cdots\sigma_{n}$ to an $\mathcal{H}$-successive element $\tau_{0}\tau_{1}\cdots\tau_{m}\in \mathrm{sd}(\Delta\mathcal{H})$ by adding elements to $\sigma_{0},\sigma_{1},\dots,\sigma_{n}$ such that $\sigma_{0},\sigma_{1},\dots,\sigma_{n}$ is a subsequence of $\tau_{0},\tau_{1},\dots,\tau_{m}$. Since $\tau_{0}\tau_{1}\cdots\tau_{m}$ is $\mathcal{H}$-successive, we have $\tau_{0}\tau_{1}\cdots\tau_{m}\in \mathrm{sd}(\mathcal{H})$. It follows that $\sigma_{0}\sigma_{1}\cdots\sigma_{n}\in\Delta (\mathrm{sd}(\mathcal{H}))$. Thus one has $\mathrm{sd}(\Delta\mathcal{H})\subseteq \Delta (\mathrm{sd}(\mathcal{H}))$.

If $\sigma_{0}\sigma_{1}\cdots\sigma_{n}\in \Delta (\mathrm{sd}(\mathcal{H}))$, then there is an element $\tau_{0}\tau_{1}\cdots\tau_{m}\in\mathrm{sd}(\mathcal{H})$ such that $\sigma_{0},\sigma_{1},\dots,\sigma_{n}$ is a subsequence of $\tau_{0},\tau_{1},\dots,\tau_{m}$. Hence, we have $\tau_{m}\in \mathcal{H}$. It follows that $\sigma_{0},\sigma_{1},\dots,\sigma_{n}\subseteq \tau_{m}$, which implies $\sigma_{0},\sigma_{1},\dots,\sigma_{n}\in\Delta \mathcal{H}$. So we have $\sigma_{0}\sigma_{1}\cdots\sigma_{n}\in \mathrm{sd}(\Delta\mathcal{H})$, implying that $\Delta (\mathrm{sd}(\mathcal{H}))\subseteq \mathrm{sd}(\Delta\mathcal{H})$.
\end{proof}

\subsection{Examples and calculation}

\begin{example}\label{example:subdivision}
Consider the hypergraph $\mathcal{H}=\{\{0\},\{1\},\{0,1\},\{1,2\},\{0,1,2\}\}$. Let $\mathbf{x}=\{0\}\{0,1,2\}$,  $\mathbf{y}=\{1\}\{0,1,2\}$ and $\mathbf{z}=\{2\}\{0,1,2\}$.
\begin{figure}[H]
\centering
\begin{tikzpicture}
\filldraw[color=black!100, fill=black!15,  thick]  (-1,-1)--(-2,0.7)--(-3,-1)   ;
\filldraw[dashed,color=black!100, fill=blue!15,  thick] (-1,-1) -- (-3,-1);
\node [font=\fontsize{8}{6}] (node011) at (-3.3,-1.2){\{0\}};
\node [font=\fontsize{8}{6}] (node012) at (-0.7,-1.2){\{2\}};
\node [font=\fontsize{8}{6}] (node112) at (-1.6,0.7){\{1\}};
\node [font=\fontsize{8}{6}] (node003) at (-3,-0.2){\{0,1\}};
\node [font=\fontsize{8}{6}] (node004) at (-1,-0.2){\{1,2\}};
\node [font=\fontsize{8}{6}] (node005) at (-2,-0.6){\{0,1,2\}};
\draw[fill=black!100](-3,-1) circle(2pt);
\draw[fill=black!0](-1, -1) circle(2pt);
\draw[fill=black!100](-2,0.7) circle(2pt);
\end{tikzpicture}
\qquad\qquad
\begin{tikzpicture}
\filldraw[color=black!100, fill=black!15,  thick]  (-1,-1)--(-2,0.7)--(-3,-1)   ;
\filldraw[dashed,color=black!100, fill=blue!15,  thick] (-1,-1) -- (-3,-1);
\filldraw[color=black!100, fill=blue!15,  thick] (-2,0.7) -- (-2,-0.433);
\filldraw[dashed,color=black!100, fill=blue!15,  thick] (-2,-0.433)-- (-2,-1);
\filldraw[dashed,color=black!100, fill=blue!15,  thick] (-1,-1) -- (-2.5,-0.15);
\filldraw[dashed,color=black!100, fill=blue!15,  thick] (-3,-1) -- (-1.5,-0.15);
\node [font=\fontsize{8}{6}] (node011) at (-3.3,-1.2){\{0\}};
\node [font=\fontsize{8}{6}] (node012) at (-0.7,-1.2){\{2\}};
\node [font=\fontsize{8}{6}] (node112) at (-1.6,0.7){\{1\}};
\draw[fill=black!100](-3,-1) circle(2pt);
\draw[fill=black!0](-1, -1) circle(2pt);
\draw[fill=black!100](-2,0.7) circle(2pt);
\draw[fill=black!0](-2,-1) circle(2pt);
\draw[fill=black!100](-2.5,-0.15) circle(2pt);
\draw[fill=black!0](-1.5,-0.15) circle(2pt);
\draw[fill=black!0](-2,-0.433) circle(2pt);
\end{tikzpicture}
\caption{The left is the original hypergraph, and the right is its subdivision. The dashed lines and the empty points represent missing hyperedges.}
\end{figure}
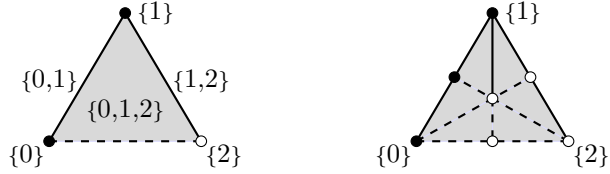
\noindent A straightforward calculation shows that the initial elements in $\mathscr{N}(\mathscr{P}(\Delta\mathcal{H}),\mathcal{H})$ are
\begin{equation*}
  \begin{split}
    &\{0\}\{0,2\}, \{0\}\{0,1\} \leq \mathbf{x}, \\
    &\{1\}\{1,2\},\{1\}\{0,1\}\leq \mathbf{y},  \\
    &\{2\}\{1,2\},\{2\}\{0,2\}\leq \mathbf{z}.
  \end{split}
\end{equation*}
The elements $\{0\}\{0,1\},\{1\}\{1,2\},\{1\}\{0,1\},\{2\}\{1,2\}$ are $\mathcal{H}$-successive, while $\{0\}\{0,2\},\{2\}\{0,2\}$ are not $\mathcal{H}$-successive. By Proposition \ref{proposition:initial}, we have $\mathbf{y}\in \mathrm{sd}(\mathcal{H})$ and $\mathbf{x},\mathbf{z}\notin \mathrm{sd}(\mathcal{H})$. Besides, $\{2\}$ is not an $\mathcal{H}$-successive element, which shows that $\{0,1,2\}\notin\mathrm{sd}(\mathcal{H})$. A similar calculation shows that $\{0,1\}\in \mathcal{H}$.
Together with all the $\mathcal{H}$-successive elements, we obtain
\begin{equation*}
  \begin{split}
    \mathrm{sd}_{0}(\mathcal{H})=&\{\{0\},\{1\},\{0,1\}\}, \\
    \mathrm{sd}_{1}(\mathcal{H})=&\{\{0\}\{0,1\},\{1\}\{0,1\},\{1\}\{1,2\},\{2\}\{1,2\},\{1\}\{0,1,2\}\},\\
    \mathrm{sd}_{2}(\mathcal{H})=&\{\{0\}\{0,1\}\{0,1,2\},\{1\}\{0,1\}\{0,1,2\},\{0\}\{0,2\}\{0,1,2\},\\
    &\{2\}\{0,2\}\{0,1,2\},\{1\}\{1,2\}\{0,1,2\},\{2\}\{1,2\}\{0,1,2\}\}.
  \end{split}
\end{equation*}
\end{example}
\begin{remark}
If $\mathcal{K}$ is a simplicial complex, recall that $\mathrm{sd}(\mathcal{K})=\mathscr{F}\mathscr{P}(\mathcal{K})$. Regard $\mathcal{H}$ as a poset with the partial order $\sigma\leq\tau$ if $\sigma$ is a subset of $\tau$. Then we have an abstract simplicial complex $\mathscr{F}\mathscr{P}(\mathcal{H})$. It is worth noting that there is no inclusion relationship between subdivision $\mathrm{sd}(\mathcal{H})$ and hypergraph $\mathscr{F}\mathscr{P}(\mathcal{H})$. Indeed, consider Example \ref{example:subdivision}.
Note that $\{0,1\}\{0,1,2\}\in \mathscr{F}\mathscr{P}(\mathcal{H})$, but $\{0,1\}\{0,1,2\}\notin \mathrm{sd}(\mathcal{H})$. On the other hand, we observe that
$\{2\}\{1,2\}\in \mathrm{sd}(\mathcal{H})$ while $\{2\}\{1,2\}\notin \mathscr{F}\mathscr{P}(\mathcal{H})$.
\end{remark}

Let $\mathcal{H}$ be a hypergraph. For a hyperedge $\mathbf{x}=\sigma_{0}\sigma_{1}\cdots \sigma_{n}\in \mathscr{N}(\mathscr{P}(\Delta\mathcal{H}))$, let $\mathcal{F}_{k}(\{\mathbf{x}\})$ be the subset of $\mathcal{P}(\{\mathbf{x}\})$ given by the elements of the form $\sigma_{0}\cdots\sigma_{k-1}\tau_{k}\sigma_{k+1}\cdots \sigma_{n}$ for $\tau_{k}\in \mathcal{C}(\{\sigma_{k}\})$. In other words, the element $\sigma_{0}\cdots\sigma_{k-1}\tau_{k}\sigma_{k+1}\cdots \sigma_{n}$ in $\mathcal{F}_{k}(\{\mathbf{x}\})$ is always covered by $\mathbf{x}$ such that $\tau_{k}$ is covered by $\sigma_{k}$. For a subset $S$ of $\mathscr{N}(\mathscr{P}(\Delta\mathcal{H}))$, we denote $\mathcal{F}_{k}(S)=\bigcup\limits_{\mathbf{x}\in S}\mathcal{F}_{k}(\{\mathbf{x}\})$.
Proposition \ref{proposition:initial} provides us with a method for computing the subdivision in terms of $\mathcal{I}({\mathbf{x}})$. From now on, we will delve into the process of computing the set $\mathcal{I}({\mathbf{x}})$.

\begin{example}
Let $\mathcal{H}$ be a hypergraph with vertices $V=\{0,1,2,3,4\}$, and let
\begin{equation*}
  \mathbf{x} =\{0\}\{0,1,2\}\{0,1,2,3,4\}\in \mathscr{N}(\mathscr{F}(\Delta\mathcal{H})).
\end{equation*}
By a straightforward calculation, we have
\begin{equation*}
  \begin{split}
    \mathcal{F}_{4}(\{\mathbf{x}\})=&\{\{0\}\{0,1,2\}\{0,1,2,3\},\{0\}\{0,1,2\}\{0,1,2,4\}\}, \\
     \mathcal{F}_{2}(\mathcal{F}_{4}(\{\mathbf{x}\}))=&\{\{0\}\{0,1\}\{0,1,2,3\},\{0\}\{0,2\}\{0,1,2,3\},\{0\}\{0,1\}\{0,1,2,4\},\\
     &\{0\}\{0,2\}\{0,1,2,4\}\},\\
     \mathcal{F}_{3}(\mathcal{F}_{2}(\mathcal{F}_{4}(\{\mathbf{x}\})))= & \{\{0\}\{0,1\}\{0,1,2\},\{0\}\{0,1\}\{0,1,3\},\{0\}\{0,1\}\{0,1,4\}, \\
    & \{0\}\{0,2\}\{0,1,2\},\{0\}\{0,2\}\{0,2,3\},\{0\}\{0,2\}\{0,2,4\}\},\\
    \mathcal{F}_{3}(\mathcal{F}_{4}(\mathcal{F}_{2}(\{\mathbf{x}\})))= &\{\{0\}\{0,1\}\{0,1,2\},\{0\}\{0,1\}\{0,1,3\},\{0\}\{0,1\}\{0,1,4\}, \\
    & \{0\}\{0,2\}\{0,1,2\},\{0\}\{0,2\}\{0,2,3\},\{0\}\{0,2\}\{0,2,4\}\}.
  \end{split}
\end{equation*}
It follows that $\mathcal{F}_{3}(\mathcal{F}_{2}(\mathcal{F}_{4}(\{\mathbf{x}\})))=\mathcal{F}_{3}(\mathcal{F}_{4}(\mathcal{F}_{2}(\{\mathbf{x}\})))=\mathcal{I}(\{\mathbf{x}\})$.

\end{example}
Let $W_{n}=\{(a_{0},a_{1},\dots, a_{n})\in \mathbb{Z}^{n+1}|0\leq a_{0}< a_{1}< \cdots< a_{n}\}$. Given an element $\omega=(a_{0},a_{1},\dots, a_{n})\in W_{n}$, if $a_{k}>a_{k-1}+1$ for $k>0$ or $a_{k}>0$ for $k=0$, we have
\begin{equation*}
  T_{k}\omega=(a_{0},\dots, a_{k}-1,\dots, a_{n})\in W_{n}.
\end{equation*}
For a given $\omega=(a_{0},a_{1},\dots, a_{n})\in W_{n}$, we consider the sequence
\begin{equation*}
  \omega=\omega_{0}\rightarrow \omega_{1}\rightarrow\dots \rightarrow\omega_{p}=(0,1,\dots, n),\quad p=\sum\limits_{i=0}^{n}(a_{i}-i)
\end{equation*}
with $\omega_{0},\dots,\omega_{p}\in W_{n}$ and $T_{k_{t}}:\omega_{t-1}\rightarrow \omega_{t}$ for some $k_{t}$. The sequence is determined by a family of integers $k_{1},k_{2},\dots,k_{p}$, and we denote the set of all such $(k_{1},k_{2},\dots,k_{p})$ by $U(\omega)$.

\begin{proposition}\label{proposition:determine}
Let $\mathbf{x}=\sigma_{0}\sigma_{1}\cdots\sigma_{q}$ be a hyperedge in $\mathscr{N}(\mathscr{P}(\Delta\mathcal{H}))$, and let $p=\sum\limits_{i=0}^{q}(\dim\sigma_{i}-i)$. For $\omega=(\dim\sigma_{0},\dots,\dim\sigma_{q})$ and each $(k_{1},k_{2},\dots,k_{p})\in U(\omega)$, we have $\mathcal{F}_{k_{p}}\cdots\mathcal{F}_{k_{1}}(\{\mathbf{x}\})=\mathcal{I}(\{\mathbf{x}\})$.
\end{proposition}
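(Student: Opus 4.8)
The plan is to isolate a single combinatorial union lemma and then run an induction on $p=\sum_{i=0}^{q}(\dim\sigma_{i}-i)$ along an arbitrary path in $U(\omega)$, the point being that the lemma forces the outcome to be the same set regardless of which path is chosen.

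First I would identify $\mathcal{I}(\{\mathbf{x}\})$ explicitly. Writing $a_{i}=\dim\sigma_{i}$, every element $\leq\mathbf{x}$ in $\mathscr{N}(\mathscr{P}(\Delta\mathcal{H}))$ is a chain $\tau_{0}\subsetneq\cdots\subsetneq\tau_{q}$ with $\tau_{i}\subseteq\sigma_{i}$, and an element covered by it is obtained by deleting a single vertex from one entry while keeping the chain strictly increasing. Hence such a chain is initial exactly when no entry can be lowered, i.e. when $\dim\tau_{0}=0$ and $\dim\tau_{i}=\dim\tau_{i-1}+1$ for all $i$; equivalently $\dim\tau_{i}=i$ for every $i$. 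Thus $\mathcal{I}(\{\mathbf{x}\})$ is precisely the set of full flags $\tau_{0}\subsetneq\cdots\subsetneq\tau_{q}$ with $\tau_{i}\subseteq\sigma_{i}$ and $\dim\tau_{i}=i$. The same analysis matches $\mathcal{F}_{k}$ with $T_{k}$: replacing $\sigma_{k}$ by a facet $\tau_{k}\supseteq\sigma_{k-1}$ lowers $\dim\sigma_{k}$ by one and is possible precisely when $a_{k}>a_{k-1}+1$ (resp. $a_{0}>0$), which is exactly the condition defining $T_{k}\omega$. In particular each $\mathcal{F}_{k_{t}}$ along a path of $U(\omega)$ is well defined on every element produced so far, and after all $p$ steps each surviving element has dimension vector $(0,1,\dots,q)$ and lies below $\mathbf{x}$, hence is a full flag; this already gives the inclusion $\mathcal{F}_{k_{p}}\cdots\mathcal{F}_{k_{1}}(\{\mathbf{x}\})\subseteq\mathcal{I}(\{\mathbf{x}\})$.

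The heart of the argument is the union lemma: whenever $T_{k}$ is defined for $\omega$,
\[
\bigcup_{\mathbf{y}\in\mathcal{F}_{k}(\{\mathbf{x}\})}\mathcal{I}(\{\mathbf{y}\})=\mathcal{I}(\{\mathbf{x}\}).
\]
The inclusion ``$\subseteq$'' is immediate because each $\mathbf{y}<\mathbf{x}$, so a globally minimal element below $\mathbf{y}$ is a minimal element below $\mathbf{x}$. For ``$\supseteq$'' I take a full flag $\mathbf{f}=f_{0}\cdots f_{q}\in\mathcal{I}(\{\mathbf{x}\})$ and must produce a facet $\tau_{k}$ of $\sigma_{k}$ with $\sigma_{k-1}\subseteq\tau_{k}$ and $f_{k}\subseteq\tau_{k}$; then $\mathbf{f}\leq\mathbf{y}:=\sigma_{0}\cdots\tau_{k}\cdots\sigma_{q}\in\mathcal{F}_{k}(\{\mathbf{x}\})$ and so $\mathbf{f}\in\mathcal{I}(\{\mathbf{y}\})$. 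Such a $\tau_{k}$ exists by a dimension count: since $f_{k-1}\subseteq f_{k}\cap\sigma_{k-1}$ with $\#|f_{k-1}|=k$, one has $\#|f_{k}\cup\sigma_{k-1}|\leq(k+1)+(a_{k-1}+1)-k=a_{k-1}+2<a_{k}+1=\#|\sigma_{k}|$, using $a_{k}>a_{k-1}+1$; hence $f_{k}\cup\sigma_{k-1}$ is a proper subset of $\sigma_{k}$ and extends to a facet $\tau_{k}$ of $\sigma_{k}$ containing $\sigma_{k-1}$. This is the step I expect to be the main obstacle, since it is where the combinatorics of the hypergraph, rather than mere bookkeeping of dimension vectors, is used; the boundary case $k=0$ (where the constraint from $\sigma_{k-1}$ is vacuous and $a_{0}>0$ is used instead) must be checked separately.

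Finally I would conclude by induction on $p$. When $p=0$ the vector $\omega$ is already $(0,1,\dots,q)$, $U(\omega)$ contains only the empty path, and both sides equal $\{\mathbf{x}\}=\mathcal{I}(\{\mathbf{x}\})$. For the step, fix $(k_{1},\dots,k_{p})\in U(\omega)$; then $(k_{2},\dots,k_{p})\in U(T_{k_{1}}\omega)$ and every $\mathbf{y}\in\mathcal{F}_{k_{1}}(\{\mathbf{x}\})$ has dimension vector $T_{k_{1}}\omega$, so the induction hypothesis yields $\mathcal{F}_{k_{p}}\cdots\mathcal{F}_{k_{2}}(\{\mathbf{y}\})=\mathcal{I}(\{\mathbf{y}\})$ for each such $\mathbf{y}$. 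Taking the union over $\mathbf{y}$ and invoking the union lemma gives
\[
\mathcal{F}_{k_{p}}\cdots\mathcal{F}_{k_{1}}(\{\mathbf{x}\})=\bigcup_{\mathbf{y}\in\mathcal{F}_{k_{1}}(\{\mathbf{x}\})}\mathcal{I}(\{\mathbf{y}\})=\mathcal{I}(\{\mathbf{x}\}),
\]
which is manifestly independent of the path, as claimed. Along the way I would fix the convention $\mathcal{I}(\{\mathbf{z}\})=\{\mathbf{z}\}$ for an already-initial $\mathbf{z}$, so that the base case and the relation $\mathcal{I}(\{\mathbf{y}\})\subseteq\mathcal{I}(\{\mathbf{x}\})$ read correctly.
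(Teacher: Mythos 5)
Your proposal is correct and takes essentially the same route as the paper: the crucial point in both arguments is that the gap condition $\dim\sigma_{k}>\dim\sigma_{k-1}+1$ forces the union of the flag entry with the preceding entry (your $f_{k}\cup\sigma_{k-1}$, the paper's $\xi_{k_{s}}\cup\tau_{k_{s}-1}$) to be a \emph{proper} subset of $\sigma_{k}$, so one can delete a vertex and produce an element of $\mathcal{F}_{k}$ still lying above the given flag. The paper organizes this as one induction along the path, carrying a descending sequence $\mathbf{z}_{s}\geq\mathbf{y}$, whereas you package the identical step as a one-step union lemma plus recursion on $p$ (with a direct dimension count in place of the paper's contradiction argument), and your explicit convention $\mathcal{I}(\{\mathbf{z}\})=\{\mathbf{z}\}$ for an initial $\mathbf{z}$ is a sensible clarification of an edge case ($p=0$) that the paper leaves implicit.
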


\begin{proof}
It is obvious that $\mathcal{F}_{k_{p}}\cdots\mathcal{F}_{k_{1}}(\{\mathbf{x}\})\subseteq\mathcal{I}(\{\mathbf{x}\})$. Now, for any initial element $\mathbf{y}=\xi_{0}\xi_{1}\cdots\xi_{q}\in \mathcal{I}(\{\mathbf{x}\})$, we will prove $\mathbf{y}\in \mathcal{F}_{k_{p}}\cdots\mathcal{F}_{k_{1}}(\{\mathbf{x}\})$ by induction. Suppose that $\mathbf{z}_{s-1}=\tau_{0}\tau_{1}\cdots\tau_{q}\in \mathcal{F}_{k_{s-1}}\cdots\mathcal{F}_{k_{1}}(\{\mathbf{x}\})$ with $\xi_{i}\subseteq \tau_{i}$ for $i=0,\cdots,s-1$. Since $(k_{1},k_{2},\dots,k_{p})\in U(\omega)$, we have that $m=\dim\tau_{k_{s}}>\dim\tau_{k_{s}-1}+1$. Let
\begin{equation*}
  \tau_{k_{s}}=\{v_{0},\cdots,v_{m}\}.
\end{equation*}
We will prove $\tau_{k_{s}-1}\cup \xi_{k_{s}}\subsetneqq\tau_{k_{s}}$. Obviously, we have $\tau_{k_{s}-1}\cup \xi_{k_{s}}\subseteq\tau_{k_{s}}$.
Suppose $\tau_{k_{s}-1}\cup \xi_{k_{s}}=\tau_{k_{s}}$. Since $\mathbf{y}=\xi_{0}\xi_{1}\cdots\xi_{q}$ is an initial element, we have
\begin{equation*}
  \xi_{k_{s}}=\xi_{k_{s}-1}\cup \{v_{\alpha}\}
\end{equation*}
for some $v_{\alpha}\in \tau_{k_{s}}$. It follows that
\begin{equation*}
  \tau_{k_{s}-1}\cup\{v_{\alpha}\}=\tau_{k_{s}-1}\cup\xi_{k_{s}-1}\cup \{v_{\alpha}\}=\tau_{k_{s}-1}\cup\xi_{k_{s}}=\tau_{k_{s}}.
\end{equation*}
Thus we obtain $\tau_{k_{s}-1}=\partial_{\alpha}\tau_{k_{s}}$, which contradicts the condition $\dim\tau_{k_{s}-1}<\dim\tau_{k_{s}}-1$. Hence, we have $\tau_{k_{s}-1}\cup \xi_{k_{s}}\neq \tau_{k_{s}}$. Then there exists an element $v_{t}\in\tau_{k_{s}}\backslash(\tau_{k_{s}-1}\cup \xi_{k_{s}})$. It follows that
\begin{equation*}
  \tau_{0}\subsetneqq \cdots \subsetneqq\tau_{k_{s}-1} \subsetneqq\tau_{k_{s}}\backslash\{v_{t}\} \subsetneqq\cdots\subsetneqq \tau_{q}.
\end{equation*}
Moreover, we can choose
\begin{equation*}
  \mathbf{z}_{s}=\{\tau_{0},\dots,\tau_{k_{s}-1},\tau_{k_{s}}\backslash\{v_{t}\},\tau_{k_{s}+1}\dots,\tau_{q}\}  \in \mathcal{F}_{k_{s}}\cdots\mathcal{F}_{k_{1}}(\{\mathbf{x}\}).
\end{equation*}
Moreover, we have $\mathbf{z}_{s}\in\mathcal{F}_{k_{s}}(\{\mathbf{z}_{s-1}\})$ and $\mathbf{y}\leq \mathbf{z}_{s}$ in $\mathscr{N}(\mathscr{P}(\Delta\mathcal{H}))$.
By  induction, we have $\mathbf{z}_{p}\in \mathcal{F}_{k_{p}}\cdots\mathcal{F}_{k_{1}}(\{\mathbf{x}\})$. Since $\mathbf{z}_{p}$ is an initial element in $\mathscr{N}(\mathscr{P}(\Delta\mathcal{H}))$, we obtain $\mathbf{y}=\mathbf{z}_{p}$. The desired result follows.
\end{proof}
Proposition \ref{proposition:determine} provides a method for computing $\mathcal{I}({\mathbf{x}})$, which is useful in determining whether an element $\mathbf{x}$ belongs to $\mathrm{sd}(\mathcal{H})$. For instance, consider $\mathbf{x}=\sigma_{0}\sigma_{1}\cdots\sigma_{q}$ with $d_{i}=\dim\sigma_{i}$ for $i=0,1,\dots,q$. We choose $(k_{1},k_{2},\dots,k_{p})$ as
\begin{equation*}
(d_{0},d_{0}-1,\dots,1,d_{1},d_{1}-1,\dots,2,\dots,d_{q},d_{q}-1,\dots,q+1),
\end{equation*}
enabling a stepwise computation to obtain $\mathcal{I}({\mathbf{x}})$.
\section{Homological invariance of subdivision}\label{section:homology_invariance}
In this section, we show that he subdivision of hypergraphs has the topological invariance with respect to the embedded homology. We will begin by presenting the main theorem, followed by several lemmas, and finally, provide the proof of the main theorem.

\begin{theorem}\label{theorem:main}
There is a natural isomorphism of embedded homology of hypergraphs
\begin{equation*}
  H_{\ast}(\mathrm{sd}): H_{\ast}(\mathcal{H})\stackrel{\cong}{\rightarrow} H_{\ast}(\mathrm{sd}(\mathcal{H})).
\end{equation*}
\end{theorem}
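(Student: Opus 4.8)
The plan is to realize the isomorphism by the classical subdivision operator and to promote the classical chain homotopy equivalence to one of embedded complexes. By Proposition \ref{proposition:subdivision_closure} the simplicial closure of $\mathrm{sd}(\mathcal H)$ is $\mathrm{sd}(\Delta\mathcal H)$, so $H_\ast(\mathrm{sd}(\mathcal H))$ is computed from the embedded complex $(D_\ast(\mathrm{sd}\mathcal H),C_\ast(\mathrm{sd}\Delta\mathcal H))$, whose ambient complex is exactly the target of the classical subdivision chain map $\rho\colon C_\ast(\Delta\mathcal H)\to C_\ast(\mathrm{sd}\Delta\mathcal H)$. I would fix a total order on the vertex set and let $\lambda\colon C_\ast(\mathrm{sd}\Delta\mathcal H)\to C_\ast(\Delta\mathcal H)$ be the last-vertex map sending a flag $\sigma_0\subsetneq\cdots\subsetneq\sigma_n$ to $\{\max\sigma_0,\dots,\max\sigma_n\}$. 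Classically $\lambda\rho=\mathrm{id}$ and $\rho\lambda\simeq\mathrm{id}$. The goal is to check that $\rho$, $\lambda$, and a suitable homotopy are \emph{embedded} maps, so that Proposition \ref{proposition:homotopy} applies and $H_\ast(\rho)$ is the desired isomorphism; since $\rho$ involves no choices, the resulting isomorphism will be natural.

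First I would show that $\rho$ restricts to a morphism of embedded complexes $(D_\ast(\mathcal H),C_\ast(\Delta\mathcal H))\to(D_\ast(\mathrm{sd}\mathcal H),C_\ast(\mathrm{sd}\Delta\mathcal H))$. For a hyperedge $\sigma\in\mathcal H$, the chain $\rho(\sigma)$ is the signed sum of the top-dimensional simplices of the subdivided simplex, namely the complete flags $\xi_0\subsetneq\cdots\subsetneq\xi_n$ with $\dim\xi_i=i$ and $\xi_n=\sigma$. Each such flag is initial in $\mathscr N(\mathscr P(\Delta\mathcal H))$ and ends in $\sigma\in\mathcal H$, hence is $\mathcal H$-successive and lies in $\mathrm{sd}(\mathcal H)$. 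Therefore $\rho(D_\ast(\mathcal H))\subseteq D_\ast(\mathrm{sd}\mathcal H)$, and $\rho$ is an embedded morphism.

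The key point is that $\lambda$ also restricts, i.e. $\lambda(D_\ast(\mathrm{sd}\mathcal H))\subseteq D_\ast(\mathcal H)$. Let $\mathbf y=\sigma_0\cdots\sigma_n\in\mathrm{sd}(\mathcal H)$, so in particular $\sigma_n\in\mathcal H$. If two consecutive maxima coincide then $\lambda(\mathbf y)=0\in D_\ast(\mathcal H)$; otherwise set $\xi_i=\{\max\sigma_0,\dots,\max\sigma_i\}$, so that $\xi_i\subseteq\sigma_i$, $\dim\xi_i=i$, and $\xi_0\subsetneq\cdots\subsetneq\xi_n$ is a complete flag refining $\mathbf y$ with $\lambda(\mathbf y)=\xi_n$. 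This flag is an initial element of $\mathcal P(\{\mathbf y\})$ (or equals $\mathbf y$ itself), so by Proposition \ref{proposition:initial} applied to $\mathbf y\in\mathrm{sd}(\mathcal H)$ we get $\xi_n\in\mathcal H$, whence $\lambda(\mathbf y)\in D_\ast(\mathcal H)$. Consequently $\lambda$ is an embedded morphism and the classical identity $\lambda\rho=\mathrm{id}$ descends to $\lambda^\flat\rho^\flat=\mathrm{id}$ on $\mathrm{Inf}_\ast(D_\ast(\mathcal H),C_\ast(\Delta\mathcal H))$, so $H_\ast(\rho)$ is a split monomorphism.

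It remains to prove surjectivity, and here lies the main obstacle: I must produce an \emph{embedded} homotopy $D'$ realizing $\rho\lambda\simeq\mathrm{id}$ on $(D_\ast(\mathrm{sd}\mathcal H),C_\ast(\mathrm{sd}\Delta\mathcal H))$, that is, one with $D'(D_\ast(\mathrm{sd}\mathcal H))\subseteq D_{\ast+1}(\mathrm{sd}\mathcal H)$; Proposition \ref{proposition:homotopy} would then finish the proof. The composite $\phi=\rho\lambda$ is an idempotent embedded chain endomorphism (since $\lambda\rho=\mathrm{id}$), which organizes the problem: $\phi^\flat$ splits $\mathrm{Inf}_\ast(\mathrm{sd}\mathcal H)$, and $\rho^\flat$ identifies $\mathrm{Inf}_\ast(\mathcal H)$ with the image of $\phi^\flat$, so it suffices to show that the complementary summand $\ker\phi^\flat$ is acyclic. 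The difficulty is that the standard acyclic carrier for $\rho\lambda\simeq\mathrm{id}$ assigns to a flag with top $\sigma_n$ the whole subdivided simplex $\mathrm{sd}(\overline{\sigma_n})$, which contains flags whose top is a proper face of $\sigma_n$ not belonging to $\mathcal H$; such flags are not in $\mathrm{sd}(\mathcal H)$, so this carrier is too coarse to keep the homotopy inside $D_\ast(\mathrm{sd}\mathcal H)$. My plan is to refine the carrier to a subcomplex of $\mathrm{sd}(\mathcal H)$ by retaining only those flags whose membership in $\mathrm{sd}(\mathcal H)$ is guaranteed by the criterion of Proposition \ref{proposition:initial}, and to verify its acyclicity directly, so that the prism operator built from it preserves $D_\ast(\mathrm{sd}\mathcal H)$. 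Finally, naturality of $H_\ast(\mathrm{sd})=H_\ast(\rho)$ will follow from the naturality of the choice-free subdivision operator $\rho$ with respect to simplicial, hence hypergraph, morphisms.
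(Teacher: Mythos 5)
Your setup coincides with the paper's: the same subdivision operator $\rho$, the same last-vertex map (the paper calls it $\pi$), the same verification that both are embedded morphisms with $\pi\rho=\mathrm{id}$, and the same intent to conclude via Proposition \ref{proposition:homotopy}. Up to that point your argument is correct; in particular your proof that the last-vertex map preserves $D_\ast$, by refining $\mathbf y=\sigma_0\cdots\sigma_n$ to a complete flag and invoking Proposition \ref{proposition:initial}, is essentially the paper's own argument.

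The gap is the step you yourself flag as ``the main obstacle'': you never produce the embedded homotopy $h$ satisfying $dh+hd=\mathrm{id}-\rho\pi$ together with $h(D_\ast(\mathrm{sd}(\mathcal H)))\subseteq D_{\ast+1}(\mathrm{sd}(\mathcal H))$; you only outline a plan to refine the standard acyclic carrier, and that plan as stated runs into a structural problem. A carrier must be a subcomplex of $\mathrm{sd}(\Delta\mathcal H)$, hence closed under faces, whereas $\mathrm{sd}(\mathcal H)$ is a hypergraph and is not closed under faces; so a carrier ``contained in $\mathrm{sd}(\mathcal H)$'' cannot accommodate the chains $h(d\mathbf x)$ that the inductive construction feeds into $h(\mathbf x)$, since the faces of $\mathbf x\in\mathrm{sd}(\mathcal H)$ need not lie in $\mathrm{sd}(\mathcal H)$. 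If instead you take carriers inside $\mathrm{sd}(\Delta\mathcal H)$, acyclicity only guarantees that the cycle $\mathbf x-\rho\pi(\mathbf x)-h(d\mathbf x)$ bounds \emph{some} chain in the carrier, not that it bounds a chain supported on hyperedges of $\mathrm{sd}(\mathcal H)$ --- and the latter relative statement is exactly what ``embedded homotopy'' requires, which no carrier argument yields for free. The paper resolves precisely this point by writing $h$ down explicitly as a cone-type operator, $h(\mathbf x)=\sum_{i}\sum_{k_1,\dots,k_i}\pm\,\Delta^{\tau_i}_{k_1\cdots k_i}\sigma_i\cdots\sigma_n$ with $\tau_i=\pi(\sigma_0\cdots\sigma_i)$, and then checking term by term, via Propositions \ref{proposition:description_p} and \ref{proposition:initial}, that each $\Delta^{\tau_i}_{k_1\cdots k_i}\sigma_i\cdots\sigma_n$ lies in $\mathrm{sd}(\mathcal H)$ whenever $\mathbf x$ does, before verifying $dh+hd=\mathrm{id}-\rho\pi$ by direct computation. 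That explicit construction and verification is the real content of the theorem, and it is missing from your proposal.
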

Let $\mathcal{H}$ be a hypergraphs. For any $\sigma\in \mathcal{H}$, the initial elements in $\mathscr{N}(\mathscr{P}(\Delta\mathcal{H}),\{\sigma\})$ are of the form
\begin{equation*}
  \Delta_{k_{1}k_{2}\cdots k_{n}}^\sigma=(\partial_{k_{n}}\cdots \partial_{k_{1}}\sigma)(\partial_{k_{n-1}}\cdots \partial_{k_{1}}\sigma)\cdots (\partial_{k_{1}}\sigma)\sigma,
\end{equation*}
where $0\leq k_{1}\leq n$, $0\leq k_{2}\leq n-1$, $\cdots$, $0\leq k_{n}\leq 1$. Here, $\partial_{i}$ denotes the face map on the simplicial complex $\Delta\mathcal{H}$. Then $\Delta_{k_{1}k_{2}\cdots k_{n}}^\sigma$ is an $\mathcal{H}$-successive element. Thus we have $\Delta_{k_{1}k_{2}\cdots k_{n}}^\sigma\in \mathrm{sd}(\mathcal{H})$.

\begin{lemma}\label{lemma:rho_morphism}
The $R$-module homomorphism $\rho:(D_{\ast}(\mathcal{H}),C_{\ast}(\Delta\mathcal{H}))\rightarrow (D_{\ast}(\mathrm{sd}(\mathcal{H})),C_{\ast}(\mathrm{sd}(\Delta\mathcal{H})))$ given by
\begin{equation*}
  \rho(\sigma)=\sum_{\mbox{\tiny
    $
\begin{array}{c}
0\leq k_{t}\leq n+1-t,\\
    1\leq t\leq n
\end{array}
$
}}(-1)^{k_{1}+\dots+k_{n}-\frac{n(n+1)}{2}}\Delta_{k_{1}k_{2}\cdots k_{n}}^\sigma.
\end{equation*}
is a morphism of embedded chain complexes.
\end{lemma}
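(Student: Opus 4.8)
The statement asserts two things, and the plan is to verify them separately: that $\rho$ is an embedded map of degree zero (it respects the submodules $D_\ast$), and that it is a chain map ($d\rho=\rho d$ on $C_\ast(\Delta\mathcal{H})$). The first is almost immediate. Each $\Delta_{k_1\cdots k_n}^\sigma$ is by construction a complete flag of faces of $\sigma$, hence an $n$-simplex of $\mathrm{sd}(\Delta\mathcal{H})=\Delta(\mathrm{sd}(\mathcal{H}))$ (Proposition \ref{proposition:subdivision_closure}), so $\rho$ lands in $C_\ast(\mathrm{sd}(\Delta\mathcal{H}))$ at the right degree. For the submodule condition, note that when $\sigma\in\mathcal{H}$ the top vertex of the flag $\Delta_{k_1\cdots k_n}^\sigma$ is $\sigma$ itself, so the flag is $\mathcal{H}$-successive and therefore lies in $\mathrm{sd}(\mathcal{H})$, exactly as recorded just before the lemma. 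Thus $\rho(D_n(\mathcal{H}))\subseteq D_n(\mathrm{sd}(\mathcal{H}))$.

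The substance is the identity $d\rho=\rho d$, which I would establish by expanding the boundary of each subdivided flag and sorting the resulting terms by which vertex is deleted. Writing $G_0<G_1<\cdots<G_n=\sigma$ for the vertices of $\Delta_{k_1\cdots k_n}^\sigma$, with $G_j=\partial_{k_{n-j}}\cdots\partial_{k_1}\sigma$ of dimension $j$, the boundary is $\sum_{j=0}^n(-1)^j(G_0\cdots\widehat{G_j}\cdots G_n)$. The plan is to show that deleting the top vertex reproduces $\rho(d\sigma)$, while deleting any other vertex produces terms that cancel in pairs.

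For the top vertex ($j=n$), deleting $\sigma$ leaves the flag $\Delta_{k_2\cdots k_n}^{\partial_{k_1}\sigma}$, and summing over all index sequences and reindexing by $i=k_1$ one compares the accumulated sign $(-1)^{k_1+\cdots+k_n-\frac{n(n+1)}{2}+n}$ against the sign $(-1)^{k_1}\cdot(-1)^{k_2+\cdots+k_n-\frac{(n-1)n}{2}}$ coming from $\sum_i(-1)^i\rho(\partial_i\sigma)$; the two agree because $-\tfrac{n(n+1)}{2}+n+\tfrac{(n-1)n}{2}=0$, so this block is exactly $\rho(d\sigma)$. For an interior deletion ($1\le j\le n-1$) the remaining chain $G_0<\cdots<G_{j-1}<G_{j+1}<\cdots<G_n$ does not see $G_j$, and since $G_{j+1}$ has two more vertices than $G_{j-1}$ there are precisely two flags filling in the gap; using the simplicial identity $\partial_a\partial_b=\partial_{b-1}\partial_a$ for $a<b$, the two filling flags differ only in the pair $(k_{n-j},k_{n-j+1})$, realized as $(b,a)$ versus $(a,b-1)$, so their total index sums differ by one and their signed contributions cancel. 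The bottom deletion ($j=0$) is the same phenomenon at the end of the flag: the two vertices of the edge $G_1$ give the two choices of $G_0$, indexed by $k_n\in\{0,1\}$, whose signs again differ by $(-1)$, so they cancel.

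The main obstacle is the sign and index bookkeeping in the cancellation argument, not any conceptual difficulty: one must track carefully how deleting a vertex changes the index sequence, invoke $\partial_i\partial_j=\partial_{j-1}\partial_i$ ($i<j$) to identify the two flags yielding a given gapped chain, and confirm that the exponent $k_1+\cdots+k_n-\tfrac{n(n+1)}{2}$ always shifts by an odd amount under this pairing. Once the top block is matched to $\rho(d\sigma)$ and every other block is shown to vanish by pairing, the identity $d\rho=\rho d$ follows, and combined with the embedded property this proves that $\rho$ is a morphism of embedded chain complexes.
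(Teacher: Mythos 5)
Your proposal is correct and takes essentially the same route as the paper's proof: the embedded property via the $\mathcal{H}$-successiveness of the flags $\Delta^{\sigma}_{k_{1}\cdots k_{n}}$, the identification of the top-vertex ($i=n$) deletion with $\rho(d\sigma)$ after the sign check $-\tfrac{n(n+1)}{2}+n+\tfrac{(n-1)n}{2}=0$, and the pairwise cancellation of all other deletions via the simplicial identity $\partial_{a}\partial_{b}=\partial_{b-1}\partial_{a}$ ($a<b$), which is exactly the paper's pairing $(k_{i},k_{i+1})\mapsto(k_{i+1},k_{i}-1)$ shifting the index sum by one. No substantive difference.
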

\begin{proof}
Now, we will show $d\rho(\sigma)=\rho(d\sigma)$. By a straightforward calculation, we have
\begin{equation*}
\begin{split}
  d\rho (\sigma)= & \sum_{i=0}^{n}(-1)^{i}\sum_{k_{1},\dots,k_{n}}(-1)^{(\sum\limits_{t=1}^{n}k_{t})-\frac{n(n+1)}{2}}\partial_{i}\Delta_{k_{1}k_{2}\cdots k_{n}}^\sigma \\
    =& \sum_{i=0}^{n-1}\sum_{k_{1},\dots,k_{n}}(-1)^{i+(\sum\limits_{t=1}^{n}k_{t})-\frac{n(n+1)}{2}}\partial_{i}\Delta_{k_{1}k_{2}\cdots k_{n}}^\sigma+\sum_{k_{1},\dots,k_{n}}(-1)^{n+(\sum\limits_{t=1}^{n}k_{t})-\frac{n(n+1)}{2}}\partial_{n}\Delta_{k_{1}k_{2}\cdots k_{n}}^\sigma.
\end{split}
\end{equation*}
Here, the sum $\sum\limits_{k_{1},\dots,k_{n}}$ runs over all the integers $0\leq k_{t}\leq n+1-t$ for $1\leq t\leq n$. Note that
\begin{equation*}
\begin{split}
   & \sum_{k_{1},\dots,k_{n}}(-1)^{i+(\sum\limits_{t=1}^{n}k_{t})-\frac{n(n+1)}{2}}\partial_{i}\Delta_{k_{1}k_{2}\cdots k_{n}}^\sigma \\
   = & \sum_{k_{t},t\neq i,i+1}(-1)^{i+(\sum\limits_{t\neq i,i+1} k_{t})-\frac{n(n+1)}{2}}\left(\sum_{k_{i+1}<k_{i}}(-1)^{k_{i}+k_{i+1}}\partial_{i}\Delta_{k_{1}k_{2}\cdots k_{n}}^\sigma
   +\sum_{k_{i+1}\geq k_{i}}(-1)^{k_{i}+k_{i+1}}\partial_{i}\Delta_{k_{1}k_{2}\cdots k_{n}}^\sigma\right).
\end{split}
\end{equation*}
Since $\partial_{k_{i+1}}\partial_{k_{i}}\sigma=\partial_{k_{i}-1}\partial_{k_{i+1}}\sigma$ for $k_{i+1}<k_{i}$, we have
\begin{equation*}
  \partial_{i}\Delta_{k_{1}k_{2}\cdots k_{n}}^\sigma=\partial_{i}\Delta_{k_{1}\cdots k_{i-1}k_{i+1}(k_{i}-1)k_{i+2}\cdots k_{n}}^\sigma,
\end{equation*}
which implies that
\begin{equation*}
  \sum_{k_{i+1}<k_{i}}(-1)^{k_{i}+k_{i+1}}\partial_{i}\Delta_{k_{1}k_{2}\cdots k_{n}}^\sigma+\sum_{k_{i+1}\geq k_{i}}(-1)^{k_{i}+k_{i+1}}\partial_{i}\Delta_{k_{1}k_{2}\cdots k_{n}}^\sigma=0.
\end{equation*}
Thus we have
\begin{equation*}
  \sum_{i=0}^{n-1}\sum_{k_{1},\dots,k_{n}}(-1)^{i+(\sum\limits_{t=1}^{n}k_{t})-\frac{n(n+1)}{2}}\partial_{i}\Delta_{k_{1}k_{2}\cdots k_{n}}^\sigma=0.
\end{equation*}
A straightforward calculation shows that
\begin{equation*}
  \begin{split}
     d\rho(\sigma)= &\sum_{k_{1},\dots,k_{n}}(-1)^{n+(\sum\limits_{t=1}^{n}k_{t})-\frac{n(n+1)}{2}}\partial_{n}\Delta_{k_{1}k_{2}\cdots k_{n}}^\sigma \\
      =& \sum_{k_{1},\dots,k_{n}}(-1)^{(\sum\limits_{t=1}^{n}k_{t})-\frac{(n-1)n}{2}}(\partial_{k_{n}}\cdots \partial_{k_{1}}\sigma)\cdots (\partial_{k_{1}}\sigma)\\
      =&  \sum_{k_{1}}(-1)^{k_{1}}\sum_{k_{2},\dots,k_{n}}(-1)^{(\sum\limits_{t=2}^{n}k_{t})-\frac{(n-1)n}{2}}\Delta_{k_{2}\cdots k_{n}}^{\partial_{k_{1}}\sigma}\\
      =& \sum_{k_{1}}(-1)^{k_{1}}\rho(\partial_{k_{1}}\sigma)\\
      =&\rho(d\sigma).
  \end{split}
\end{equation*}
Hence, the map $\rho$ is indeed a morphism of embedded complexes.
\end{proof}

Now, define the map $\pi:C_{\ast}(\mathrm{sd}(\Delta\mathcal{H}))\rightarrow C_{\ast}(\Delta\mathcal{H})$ as follows. For an ordered set $\sigma=\{v_{0},v_{1},\dots,v_{s}\}$ in the abstract simplicial complex $\Delta\mathcal{H}$, let $\pi(\sigma)=v_{s}$. For a hyperedge $\sigma_{0}\sigma_{1}\cdots\sigma_{n}$ in $\mathrm{sd}(\Delta\mathcal{H})$, let
\begin{equation*}
  \pi(\sigma_{0}\sigma_{1}\cdots\sigma_{n})=\left\{
        \begin{array}{ll}
          \{\pi(\sigma_{0}),\dots,\pi(\sigma_{n})\}, & \hbox{if $\pi(\sigma_{0}),\dots,\pi(\sigma_{n})$ are different;} \\
          0, & \hbox{otherwise.}
        \end{array}
      \right.
\end{equation*}
\begin{lemma}
We have a morphism $\pi: (D_{\ast}(\mathrm{sd}(\mathcal{H})),C_{\ast}(\mathrm{sd}(\Delta\mathcal{H})))\rightarrow (D_{\ast}(\mathcal{H}),C_{\ast}(\Delta\mathcal{H}))$ of embedded complexes.
\end{lemma}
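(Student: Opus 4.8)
The plan is to check directly the two conditions that define a morphism of embedded complexes for the map $\pi$: first, that $\pi$ is a chain map on the ambient complexes, i.e.\ $\partial\pi=\pi\partial$ on $C_{\ast}(\mathrm{sd}(\Delta\mathcal{H}))$; and second, that $\pi$ carries the distinguished submodule into the target one, i.e.\ $\pi\big(D_{\ast}(\mathrm{sd}(\mathcal{H}))\big)\subseteq D_{\ast}(\mathcal{H})$.

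For the chain-map condition I would observe that $\pi$ is the chain map induced by the \emph{last-vertex map}. Fix the total order on the vertex set used to write each simplex as $\{v_{0}<\cdots<v_{s}\}$. The vertices of $\mathrm{sd}(\Delta\mathcal{H})$ are precisely the simplices $\sigma\in\Delta\mathcal{H}$, and the assignment $\sigma\mapsto\pi(\sigma)=\max\sigma$ sends each to a vertex of $\Delta\mathcal{H}$. This extends to a simplicial map: for a simplex $\sigma_{0}\subsetneq\cdots\subsetneq\sigma_{n}$ of $\mathrm{sd}(\Delta\mathcal{H})$ one has $\pi(\sigma_{i})=\max\sigma_{i}\in\sigma_{i}\subseteq\sigma_{n}$, so the image $\{\pi(\sigma_{0}),\dots,\pi(\sigma_{n})\}$ is a face of $\sigma_{n}$ and hence lies in $\Delta\mathcal{H}$. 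The prescription that $\pi$ vanish when the $\pi(\sigma_{i})$ are not pairwise distinct is exactly the standard convention for the chain map of a simplicial map, so $\partial\pi=\pi\partial$ follows from the usual computation, in which the terms carrying a repeated last vertex cancel in pairs. This may also be verified by comparing $\partial\pi(\sigma_{0}\cdots\sigma_{n})$ and $\pi\partial(\sigma_{0}\cdots\sigma_{n})$ summand by summand.

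The substantive point is the second condition, and here I would invoke Proposition \ref{proposition:initial} together with the explicit description of the minimal (complete-flag) elements lying below a hyperedge. Let $\mathbf{x}=\sigma_{0}\sigma_{1}\cdots\sigma_{n}\in\mathrm{sd}(\mathcal{H})$ be a generator of $D_{\ast}(\mathrm{sd}(\mathcal{H}))$. If two of the $\pi(\sigma_{i})$ coincide, then $\pi(\mathbf{x})=0\in D_{\ast}(\mathcal{H})$ and there is nothing to prove, so assume $\pi(\sigma_{0}),\dots,\pi(\sigma_{n})$ are pairwise distinct. The key construction is to set
\[
  \xi_{i}=\{\pi(\sigma_{0}),\pi(\sigma_{1}),\dots,\pi(\sigma_{i})\},\qquad 0\le i\le n.
\]
Since $\sigma_{j}\subseteq\sigma_{i}$ for $j\le i$ we have $\pi(\sigma_{j})=\max\sigma_{j}\in\sigma_{i}$, whence $\xi_{i}\subseteq\sigma_{i}$; and because the $\pi(\sigma_{j})$ are distinct, $\xi_{i-1}\subsetneq\xi_{i}$ with $\#|\xi_{i}|=i+1$. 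Thus $\xi=\xi_{0}\xi_{1}\cdots\xi_{n}$ is a complete flag, i.e.\ an initial (minimal) element of $\mathscr{N}(\mathscr{P}(\Delta\mathcal{H}))$, and it satisfies $\xi\le\mathbf{x}$ componentwise, so $\xi\in\mathcal{I}(\{\mathbf{x}\})$. Since $\mathbf{x}\in\mathrm{sd}(\mathcal{H})=\mathscr{H}(\mathscr{P}(\Delta\mathcal{H}),\mathcal{H})$, Proposition \ref{proposition:initial} yields $\mathcal{I}(\{\mathbf{x}\})\subseteq\mathscr{N}(\mathscr{P}(\Delta\mathcal{H}),\mathcal{H})$, so the top cell $\xi_{n}$ of $\xi$ belongs to $\mathcal{H}$. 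As $\xi_{n}=\{\pi(\sigma_{0}),\dots,\pi(\sigma_{n})\}=\pi(\mathbf{x})$, this gives $\pi(\mathbf{x})\in\mathcal{H}$, hence $\pi(\mathbf{x})\in D_{\ast}(\mathcal{H})$ (with the edge case $\mathbf{x}$ itself a complete flag handled uniformly, since then $\xi=\mathbf{x}$).

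I expect the main obstacle to be this second condition, and precisely the idea of realizing $\pi(\mathbf{x})$ itself as the top of an honest initial element $\xi$ below $\mathbf{x}$, so that Proposition \ref{proposition:initial} applies. The delicate steps are (i) checking that distinctness of the last vertices is exactly what promotes the nested sets $\xi_{i}$ to a genuine complete flag, and (ii) using the reading of $\mathcal{I}(\{\mathbf{x}\})$ as the set of complete flags $\le\mathbf{x}$, consistent with the computations in Example \ref{example:subdivision} and Proposition \ref{proposition:determine}. It is worth stressing that this is where the hypergraph (non-simplicial) nature enters: a subset of the hyperedge $\sigma_{n}$ need not itself be a hyperedge, so $\pi(\mathbf{x})\in\mathcal{H}$ is \emph{not} automatic from $\pi(\mathbf{x})\subseteq\sigma_{n}$; it is forced only by the hypothesis $\mathbf{x}\in\mathrm{sd}(\mathcal{H})$ acting through all of its complete flags.
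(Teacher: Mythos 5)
Your proof is correct and follows essentially the same route as the paper: both verify the chain-map identity for the last-vertex map and then realize $\pi(\mathbf{x})$ as the top of the nested flag $\xi_{i}=\{\pi(\sigma_{0}),\dots,\pi(\sigma_{i})\}$ sitting below $\mathbf{x}$ in $\mathscr{N}(\mathscr{P}(\Delta\mathcal{H}))$. The only immaterial difference is that you conclude via Proposition \ref{proposition:initial} (viewing $\xi$ as an initial element), whereas the paper notes that the same flag lies in $\mathcal{P}(\{\mathbf{x}\})$ and invokes Proposition \ref{proposition:description_p}; you also spell out the edge case $\xi=\mathbf{x}$, which the paper leaves implicit.
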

\begin{proof}
If $\pi(\sigma_{0}),\dots,\pi(\sigma_{n})$ are different, we have
\begin{equation*}
  \begin{split}
\pi d(\sigma_{0}\sigma_{1}\cdots\sigma_{n})=& \pi(\sum_{i=0}^{n}(-1)^{i}\sigma_{0}\cdots\sigma_{i-1}\sigma_{i+1}\cdots\sigma_{n})\\
      =&\sum_{i=0}^{n}(-1)^{i}\{\pi(\sigma_{0}),\cdots,\pi(\sigma_{i}),\pi(\sigma_{i+1})\cdots,\pi(\sigma_{n})\}\\
      =&d\pi (\sigma_{0}\sigma_{1}\cdots\sigma_{n}).
  \end{split}
\end{equation*}
If $\sigma_{t}=\sigma_{t+1}$ for some $t$, a direct computation shows that
\begin{equation*}
  \pi d(\sigma_{0}\sigma_{1}\cdots\sigma_{n})=0=d\pi (\sigma_{0}\sigma_{1}\cdots\sigma_{n}).
\end{equation*}
On the other hand, if $\sigma_{0}\sigma_{1}\cdots\sigma_{n}\in \mathrm{sd}(\mathcal{H})$ and $\pi(\sigma_{0}),\dots,\pi(\sigma_{n})$ are different, we assume that
\begin{equation*}
  \pi (\sigma_{0}\sigma_{1}\cdots\sigma_{n})=\{\pi(\sigma_{0}),\dots,\pi(\sigma_{n})\}=\{v_{d_{0}},\dots,v_{d_{n}}\},
\end{equation*}
where  $d_{i}=\dim\sigma_{i}$ and $d_{0}<d_{1}< \cdots < d_{n}$.
Note that $\{v_{d_{0}}\}=\partial_{0}^{d_{0}}\sigma_{0}$ and each $\{v_{d_{0}},\dots,v_{d_{i}}\}$ can be obtained by face maps of $\sigma_{i}$ in several steps.
Thus we have
\begin{equation*}
  \{v_{d_{0}}\}\{v_{d_{0}},v_{d_{1}}\}\cdots\{v_{d_{0}},\dots,v_{d_{n}}\}\in \mathcal{P}(\{\sigma_{0}\sigma_{1}\cdots\sigma_{n}\}).
\end{equation*}
By Proposition \ref{proposition:description_p}, one has $\{v_{d_{0}}\}\{v_{d_{0}},v_{d_{1}}\}\cdots\{v_{d_{0}},\dots,v_{d_{n}}\}\in \mathrm{sd}(\mathcal{H})$. It follows that $\{v_{d_{0}},\dots,v_{d_{n}}\}\in \mathcal{H}$. So we have $\pi(D_{\ast}(\mathrm{sd}(\mathcal{H})))\subseteq D_{\ast}(\mathcal{H})$.
Thus $\pi$ induces a morphism
\begin{equation*}
  \pi: (D_{\ast}(\mathrm{sd}(\mathcal{H})),C_{\ast}(\mathrm{sd}(\Delta\mathcal{H})))\rightarrow (D_{\ast}(\mathcal{H}),C_{\ast}(\Delta\mathcal{H}))
\end{equation*}
of embedded complexes.
\end{proof}

\begin{proof}[Proof of Theorem \ref{theorem:main}]
We will prove the morphism
\begin{equation*}
  \rho:(D_{\ast}(\mathcal{H}),C_{\ast}(\Delta\mathcal{H}))\rightarrow (D_{\ast}(\mathrm{sd}(\mathcal{H})),C_{\ast}(\mathrm{sd}(\Delta\mathcal{H})))
\end{equation*}
is a chain homotopy equivalence between embedded chain complexes.

Now, a straightforward calculation shows
\begin{equation*}
  \pi(\Delta_{k_{1}k_{2}\cdots k_{n}}^\sigma)=\left\{
                                                \begin{array}{ll}
                                                  \sigma, & \hbox{$k_{1}=1,k_{2}=2,\dots,k_{n}=n$;} \\
                                                  0, & \hbox{otherwise.}
                                                \end{array}
                                              \right.
\end{equation*}
Thus we have
\begin{equation*}
\begin{split}
   \pi\rho(\sigma)=& \pi\left(\sum_{\mbox{\tiny
    $
\begin{array}{c}
0\leq k_{t}\leq n+1-t,\\
    1\leq t\leq n
\end{array}
$
}}(-1)^{k_{1}+\dots+k_{n}-\frac{n(n+1)}{2}}\Delta_{k_{1}k_{2}\cdots k_{n}}^\sigma\right) \\
    =& \sum_{\mbox{\tiny
    $
\begin{array}{c}
0\leq k_{t}\leq n+1-t,\\
    1\leq t\leq n
\end{array}
$
}}(-1)^{k_{1}+\dots+k_{n}-\frac{n(n+1)}{2}}\pi(\Delta_{k_{1}k_{2}\cdots k_{n}}^\sigma)\\
   =&\sigma.
\end{split}
\end{equation*}
It follows that $\pi\rho=\mathrm{id}$.

On the other hand, we define $h:C_{\ast}(\mathrm{sd}(\Delta\mathcal{H}))\rightarrow C_{\ast+1}(\mathrm{sd}(\Delta\mathcal{H}))$ as follows. For a hyperedge $\mathbf{x}=\sigma_{0}\sigma_{1}\cdots\sigma_{n}$ in $\mathrm{sd}(\Delta\mathcal{H})$, let $\tau_{i}=\pi(\sigma_{0}\sigma_{1}\cdots\sigma_{i})$. Then we define
\begin{equation*}
  h(\mathbf{x})=\sum\limits_{i=0}^{n}\sum\limits_{k_{1},\dots,k_{i}}(-1)^{i+\sum\limits_{t=0}^{i}k_{t}-\frac{i(i+1)}{2}}\Delta^{\tau_{i}}_{k_{1},\dots,k_{i}}\sigma_{i}\cdots \sigma_{n},
\end{equation*}
where the sum $\sum\limits_{k_{1},\dots,k_{i}}$ runs over all the integers $0\leq k_{t}\leq i+1-t$ for $1\leq t\leq i$ and $\Delta^{\tau_{i}}_{k_{1},\dots,k_{i}}\sigma_{i}\cdots \sigma_{n}$ denotes the $(n+1)$-hyperedge
\begin{equation*}
  (\partial_{k_{i}}\cdots \partial_{k_{1}}\tau_{i})(\partial_{k_{i-1}}\cdots \partial_{k_{1}}\tau_{i} )\cdots (\partial_{k_{1}}\tau_{i})\tau_{i}\sigma_{i}\cdots \sigma_{n}.
\end{equation*}
Specially, if $\tau_{i}=0$, the corresponding $(n+1)$-hyperedge is reduced to zero. We will prove that $h(D_{\ast}(\mathrm{sd}(\mathcal{H})))\subseteq D_{\ast}(\mathrm{sd}(\mathcal{H}))$. Assume that $\mathbf{x}=\sigma_{0}\sigma_{1}\cdots\sigma_{n}$ is a hyperedge in $\mathrm{sd}(\mathcal{H})$, it suffices to prove $\Delta^{\tau_{i}}_{k_{1}\cdots k_{i}}\sigma_{i}\cdots \sigma_{n}\in \mathrm{sd}(\mathcal{H})$ for $0\leq i\leq n$ and $0\leq k_{t}\leq i+1-t$ for $1\leq t\leq i$. We consider the case that $\tau_{i}$ is nontrivial.
\begin{enumerate}
  \item[(i)] For any initial element
  \begin{equation*}
  \tau_{0}\cdots\tau_{i-1}\tau_{i}\sigma'_{i}\cdots \sigma'_{n}\leq\tau_{0}\cdots\tau_{i-1}\tau_{i}\sigma_{i}\cdots \sigma_{n},
  \end{equation*}
  in $\mathscr{N}(\mathscr{P}(\Delta \mathcal{H}))$, we have
  \begin{equation*}
  \tau_{0}\cdots\tau_{i-1}\tau_{i}\sigma'_{i+1}\cdots \sigma'_{n}\leq \tau_{0}\cdots\tau_{i-1}\tau_{i}\sigma_{i+1}\cdots \sigma_{n}.
  \end{equation*}
  Note that $\tau_{0}\cdots\tau_{i-1}\tau_{i}\sigma_{i+1}\cdots \sigma_{n}\leq\sigma_{0} \cdots  \sigma_{n}$.
  Thus we get
  \begin{equation*}
  \tau_{0}\cdots\tau_{i-1}\tau_{i}\sigma'_{i+1}\cdots \sigma'_{n}\leq \sigma_{0} \cdots  \sigma_{n}.
  \end{equation*}
  Since $\sigma_{0}\cdots \sigma_{n}\in \mathrm{sd}(\mathcal{H})$, we have $\tau_{0}\cdots\tau_{i-1}\tau_{i}\sigma'_{i+1}\cdots \sigma'_{n}\in \mathrm{sd}(\mathcal{H})$.
  By definition, one has $\sigma_{n}'\in \mathcal{H}$.
  It follows that the hyperedge $\tau_{0}\dots\tau_{i}\sigma'_{i}\cdots \sigma'_{n}$ is $\mathcal{H}$-successive. By Proposition \ref{proposition:initial}, we have $\tau_{0}\cdots\tau_{i}\sigma_{i}\cdots \sigma_{n}\in \mathrm{sd}(\mathcal{H})$.
  \item[(ii)] To show $\Delta^{\tau_{i}}_{k_{1}\cdots k_{i}}\sigma_{i}\cdots \sigma_{n}\in \mathrm{sd}(\mathcal{H})$, it suffices to prove $\Delta^{\tau_{i}}_{k_{1}\cdots k_{i}}\sigma_{i}'\cdots \sigma'_{n}\in \mathcal{P}(\mathcal{H})$  for each initial hyperedge $\Delta^{\tau_{i}}_{k_{1}\cdots k_{i}}\sigma_{i}'\cdots \sigma'_{n}\leq\Delta^{\tau_{i}}_{k_{1}\cdots k_{i}}\sigma_{i}\cdots \sigma_{n}$ in $\mathscr{N}(\mathscr{P}(\Delta \mathcal{H}))$. In view of $\tau_{0}\cdots\tau_{i}\sigma_{i}\cdots \sigma_{n}\in \mathrm{sd}(\mathcal{H})$, we have that $\tau_{0}\cdots\tau_{i}\sigma_{i}'\cdots \sigma_{n}'\in \mathrm{sd}(\mathcal{H})$. It follows that  $\sigma_{n}'\in \mathcal{H}$, and then $\Delta^{\tau_{i}}_{k_{1}\cdots k_{i}}\sigma_{i}'\cdots \sigma'_{n}$ is $\mathcal{H}$-successive.
\end{enumerate}
From the above discussion, we obtain an embedded map
\begin{equation*}
  h:(D_{\ast}(\mathrm{sd}(\mathcal{H})),C_{\ast}(\mathrm{sd}(\Delta\mathcal{H})))\rightarrow (D_{\ast+1}(\mathrm{sd}(\mathcal{H})),C_{\ast+1}(\mathrm{sd}(\Delta\mathcal{H}))).
\end{equation*}

Let $\mathbf{x}=\sigma_{0}\sigma_{1}\cdots\sigma_{n}$ be a hyperedge in $\mathrm{sd}(\mathcal{H})$. We will prove $dh+hd=\mathrm{id}-\rho\pi$.\\
Case (I). $\pi(\mathbf{x})$ is nontrivial. By a direct calculation, we have
\begin{equation*}
\begin{split}
  d h(\mathbf{x})= & \sum_{j=0}^{n+1}  \sum\limits_{i=0}^{n}\sum\limits_{k_{1},\dots,k_{i}}(-1)^{i+\sum\limits_{t=0}^{i}k_{t}-\frac{i(i+1)}{2}+j}\partial_{j}(\Delta^{\tau_{i}}_{k_{1},\dots,k_{i}}\sigma_{i}\cdots \sigma_{n}) \\
    =&   \sum\limits_{i=0}^{n}\sum\limits_{k_{1},\dots,k_{i}}\left(\sum_{j=0}^{i}(-1)^{i+\sum\limits_{t=0}^{i}k_{t}-\frac{i(i+1)}{2}+j}(\partial_{j}\Delta^{\tau_{i}}_{k_{1},\dots,k_{i}})\sigma_{i}\cdots \sigma_{n}\right.\\
    &\left.+\sum_{j=i+1}^{n+1}(-1)^{i+\sum\limits_{t=0}^{i}k_{t}-\frac{i(i+1)}{2}+j}\Delta^{\tau_{i}}_{k_{1},\dots,k_{i}}\sigma_{i}\cdots\hat{\sigma}_{j-1}\cdots \sigma_{n}\right)\\
 =&   \sum\limits_{i=0}^{n}\sum\limits_{k_{1},\dots,k_{i}}\sum_{j=0}^{i}(-1)^{i+\sum\limits_{t=0}^{i}k_{t}-\frac{i(i+1)}{2}+j}(\partial_{j}\Delta^{\tau_{i}}_{k_{1},\dots,k_{i}})\sigma_{i}\cdots \sigma_{n}\\
    &+\sum\limits_{i=0}^{n}\sum\limits_{k_{1},\dots,k_{i}}\sum_{j=i}^{n}(-1)^{i+\sum\limits_{t=0}^{i}k_{t}-\frac{i(i+1)}{2}+j+1}\Delta^{\tau_{i}}_{k_{1},\dots,k_{i}}\sigma_{i}\cdots\hat{\sigma}_{j}\cdots \sigma_{n}\\
     =&I_{1}+I_{2}.
\end{split}
\end{equation*}

\begin{equation*}
  \begin{split}
     h(d \mathbf{x}) =& h(\sum_{j=0}^{n}(-1)^{j}\sigma_{0}\cdots\hat{\sigma}_{j}\cdots\sigma_{n}) \\
      =& \sum_{j=0}^{n}\sum\limits_{k_{1},\dots,k_{i}}(-1)^{j}\left(\sum\limits_{i=0}^{j-1}(-1)^{i+\sum\limits_{t=0}^{i}k_{t}-\frac{i(i+1)}{2}}\Delta_{k_{1}\cdots k_{i}}^{\tau_{i}}\sigma_{i}\cdots\hat{\sigma}_{j}\cdots\sigma_{n}          \right.\\
      &+\left. \sum\limits_{i=j}^{n-1}(-1)^{i+\sum\limits_{t=0}^{i}k_{t}-\frac{i(i+1)}{2}}\Delta_{k_{1}\cdots k_{i}}^{\tau_{i+1}^{j}}\sigma_{i+1}\cdots\sigma_{n}            \right)\\
      =&  \sum_{i=0}^{n-1}\sum\limits_{k_{1},\dots,k_{i}}\sum\limits_{j=0}^{i}(-1)^{i+\sum\limits_{t=0}^{i}k_{t}-\frac{i(i+1)}{2}+j}\Delta_{k_{1}\cdots k_{i}}^{\tau_{i+1}^{j}}\sigma_{i+1}\cdots\sigma_{n}       \\
      &+\sum_{i=0}^{n-1}\sum\limits_{k_{1},\dots,k_{i}} \sum\limits_{j=i+1}^{n}(-1)^{i+\sum\limits_{t=0}^{i}k_{t}-\frac{i(i+1)}{2}+j}\Delta_{k_{1}\cdots k_{i}}^{\tau_{i}}\sigma_{i}\cdots\hat{\sigma}_{j}\cdots\sigma_{n}  \\
    =&J_{1}+J_{2}.
  \end{split}
\end{equation*}
Here, $\tau_{i}^{j}=\{\pi(\sigma_{0}),\dots,\pi(\sigma_{j-1}),\pi(\sigma_{j+1}),\dots,\pi(\sigma_{i})\}$. Note that
\begin{equation*}
  I_{2}+J_{2}=-\sum\limits_{i=0}^{n}\sum\limits_{k_{1},\dots,k_{i}}(-1)^{\sum\limits_{t=0}^{i}k_{t}-\frac{i(i+1)}{2}}\Delta^{\tau_{i}}_{k_{1},\dots,k_{i}}\sigma_{i+1}\cdots \sigma_{n},
\end{equation*}
\begin{equation*}
  I_{1}= \sum\limits_{i=0}^{n}(-1)^{i}\rho(d\tau_{i})\sigma_{i}\cdots \sigma_{n},
\end{equation*}
and
\begin{equation*}
\begin{split}
  J_{1}= &  \sum_{i=0}^{n-1}\sum\limits_{k_{1},\dots,k_{i}}\sum\limits_{j=0}^{i}(-1)^{i+\sum\limits_{t=0}^{i}k_{t}-\frac{i(i+1)}{2}+j}\Delta_{k_{1}\cdots k_{i}}^{\tau_{i+1}^{j}}\sigma_{i+1}\cdots\sigma_{n} \\
    =& \sum_{i=0}^{n-1}(-1)^{i}\rho(d\tau_{i+1})\sigma_{i+1}\cdots\sigma_{n}- \sum_{i=0}^{n-1}\sum\limits_{k_{1},\dots,k_{i}}(-1)^{\sum\limits_{t=0}^{i}k_{t}-\frac{i(i+1)}{2}+1}\Delta_{k_{1}\cdots k_{i}}^{\tau_{i+1}^{i+1}}\sigma_{i+1}\cdots\sigma_{n}\\
    =&\sum_{i=1}^{n}(-1)^{i-1}\rho(d\tau_{i})\sigma_{i}\cdots\sigma_{n}+\sum_{i=0}^{n-1}\sum\limits_{k_{1},\dots,k_{i}}(-1)^{\sum\limits_{t=0}^{i}k_{t}-\frac{i(i+1)}{2}}\Delta_{k_{1}\cdots k_{i}}^{\tau_{i}}\sigma_{i+1}\cdots\sigma_{n}.
\end{split}
\end{equation*}
Combined with the above calculations, we get
\begin{equation*}
I_{1}+J_{1}+I_{2}+J_{2}=\sigma_{0}\sigma_{1}\cdots\sigma_{n}-\sum\limits_{k_{1},\dots,k_{n}}(-1)^{\sum\limits_{t=0}^{n}k_{t}-\frac{n(n+1)}{2}}\Delta^{\tau_{n}}_{k_{1},\dots,k_{n}}.
\end{equation*}
On the other hand, we have
\begin{equation*}
    \mathbf{x}-\rho\pi(\mathbf{x})=\sigma_{0}\sigma_{1}\cdots\sigma_{n}-\sum\limits_{k_{1},\dots,k_{n}}(-1)^{\sum\limits_{t=0}^{n}k_{t}-\frac{n(n+1)}{2}}\Delta^{\tau_{n}}_{k_{1},\dots,k_{n}}.
\end{equation*}
It follows that $dh+hd=\mathrm{id}-\rho\pi$.\\
Case (II). $\pi(\mathbf{x})$ is trivial. We assume that $\pi(\sigma_{0}),\pi(\sigma_{1}),\dots,\pi(\sigma_{m})$ are different and $\pi(\sigma_{m})=\pi(\sigma_{m+1})$ for $m<n$. Then $\tau_{i}=0$ for $i\geq m+1$. The calculation is similar to Case (I). A different point is shown as the following.
\begin{equation*}
\begin{split}
  J_{1}= &  \sum_{i=0}^{n-1}\sum\limits_{k_{1},\dots,k_{i}}\sum\limits_{j=0}^{i}(-1)^{i+\sum\limits_{t=0}^{i}k_{t}-\frac{i(i+1)}{2}+j}\Delta_{k_{1}\cdots k_{i}}^{\tau_{i+1}^{j}}\sigma_{i+1}\cdots\sigma_{n} \\
    =& \sum_{i=0}^{m-1}(-1)^{i}\rho(d\tau_{i+1})\sigma_{i+1}\cdots\sigma_{n}- \sum_{i=0}^{m-1}\sum\limits_{k_{1},\dots,k_{i}}(-1)^{\sum\limits_{t=0}^{i}k_{t}-\frac{i(i+1)}{2}+1}\Delta_{k_{1}\cdots k_{i}}^{\tau_{i+1}^{i+1}}\sigma_{i+1}\cdots\sigma_{n}\\
    &+(-1)^{\sum\limits_{t=0}^{m}k_{m}-\frac{m(m+1)}{2}}\Delta_{k_{1}\cdots k_{m}}^{\tau_{m+1}^{m}}\sigma_{m+1}\cdots\sigma_{n}-(-1)^{\sum\limits_{t=0}^{m}k_{m}-\frac{m(m+1)}{2}}\Delta_{k_{1}\cdots k_{i}}^{\tau_{m+1}^{m+1}}\sigma_{m+1}\cdots\sigma_{n}\\
    =&\sum_{i=1}^{m-1}(-1)^{i-1}\rho(d\tau_{i})\sigma_{i}\cdots\sigma_{n}+\sum_{i=0}^{m-1}\sum\limits_{k_{1},\dots,k_{i}}(-1)^{\sum\limits_{t=0}^{i}k_{t}-\frac{i(i+1)}{2}}\Delta_{k_{1}\cdots k_{i}}^{\tau_{i}}\sigma_{i+1}\cdots\sigma_{n}.
\end{split}
\end{equation*}
Here, we use the fact $\tau_{m+1}^{m}=\tau_{m+1}^{m+1}$. A direct computation shows that
\begin{equation*}
  \mathrm{id}-\rho\pi=dh+hd.
\end{equation*}
Finally, by Proposition \ref{proposition:homotopy}, we obtain $H(\pi)H(\rho)=\mathrm{id}$ and $H(\rho)H(\pi)=\mathrm{id}$. Hence $\rho$ is a quasi-isomorphism.  The naturality is from the construction of the subdivision.
\end{proof}

\section{Acknowledgments}
The work was supported by Natural Science Foundation of China (NSFC grant no. 11971144), High-level Scientic Research Foundation of Hebei Province, the start-up research fund from BIMSA.

\bibliographystyle{plain}  
\bibliography{Reference}

\end{document}